\documentclass[a4paper, 11pt]{article}
\RequirePackage[T1]{fontenc}
\RequirePackage{amsmath, amssymb, amsfonts, amsthm, amscd, mathtools, leftidx}
\RequirePackage{algorithm, algpseudocode}
\RequirePackage{enumerate}
\RequirePackage{multirow, booktabs}
\RequirePackage{pgfplots, pgfplotstable}
\RequirePackage{tikz}
\pgfplotsset{compat=1.18}
\usetikzlibrary{mindmap, shadows, calc}
\RequirePackage{caption, subfig}
\RequirePackage[hidelinks,pdfencoding=auto]{hyperref}
\RequirePackage{siunitx}
\sisetup{round-mode=places, round-precision=3}
\RequirePackage{geometry}
\geometry{margin=1in}
\RequirePackage{setspace}
\onehalfspacing

\theoremstyle{definition}
\newtheorem{definition}{Definition}

\newtheorem{remark}{Remark}
\newtheorem{example}{Example}
\theoremstyle{plain}
\newtheorem{theorem}{Theorem}

\newtheorem{proposition}{Proposition}
\newtheorem{corollary}{Corollary}

\DeclareMathOperator{\diag}{diag}
\DeclareMathOperator{\co}{co}
\DeclareMathOperator{\distr}{distr}

\newcommand{\RR}{\mathbb{R}}

\newcommand{\ZZ}{\mathbb{Z}}
\newcommand{\NN}{\mathbb{N}}

\newcommand{\low}[1]{\underline{#1}}
\newcommand{\up}[1]{\overline{#1}}

\def\tmatrices{\mathcal{P}}

\def\states{\mathcal{X}}

\def\cset{\mathcal{M}}
\def\csete{\mathcal{C}}

\title{Reversible Imprecise Markov Chains}
\author{Damjan \v{S}kulj\\
	University of Ljubljana, Faculty of Social Sciences\\
	Kardeljeva pl. 5, SI-1000 Ljubljana, Slovenia\\
	\href{mailto:damjan.skulj@fdv.uni-lj.si}{\tt damjan.skulj@fdv.uni-lj.si}}
\date{}

\begin{document}
	\maketitle
	\begin{abstract}
		Reversible Markov chains play a central role in stochastic modelling and in algorithms such as Markov chain Monte Carlo (MCMC). Motivated by the fundamental importance of reversibility in classical settings, this paper develops a theoretical framework for reversible imprecise Markov chains. We focus on their structural properties and their representation through joint distribution matrices. Adopting the strong independence interpretation, we reverse every precise chain compatible with a given imprecise Markov chain specification. Since the reversed ensemble generally cannot be encoded by the usual forward model defined by an imprecise initial distribution and a set of transition matrices, we introduce a symmetric representation based on credal sets of two-step joint distribution (or edge measure) matrices. This strictly more expressive framework naturally admits the reversal operation and reduces reversibility to simple matrix symmetry. Moreover, forward and reverse dynamics can be described simultaneously within a single closed convex set, providing a unified structural basis for the analysis of expectations of path-dependent functionals. We illustrate the theory with random walks on graphs and outline methods for computing lower and upper expectations of such functionals.

		\smallskip
		
		\noindent\textbf{Keywords:} reversible Markov chains, imprecise probabilities, joint distributions, ergodicity, time reversal
		
		\noindent\textbf{MSC (2010):} 60J10, 60A86
	\end{abstract}
	
	\section{Introduction}
	Classical Markov chain analysis assumes that the initial distribution and each transition matrix are known exactly. In practise, these assumptions are rarely fulfilled due to vague expert judgements, scarce data, and unknown dependencies. Replacing the precise initial and transition probabilities with \emph{credal sets} leads to more robust models and, consequently, to more reliable inferences. This idea was first introduced as \emph{Markov set chains}~\cite{hart:98}. Subsequent work in the general framework of \emph{imprecise probabilities} \cite{augustin2014introduction, walley:91} developed analogous models for discrete time~\cite{krein:03,decooman-2008-a, utkin:02, krak2019hitting,  lopatatzidis2017computing,lopatatzidis2016robust, skulj:09IJAR, t2021average} and later extended them to continuous time~\cite{erreygers2022markovian, krak2017imprecise, nendel2021markov, skulj:15AMC}. The resulting theory, known as \emph{imprecise Markov chains}, enables exact worst-case inference without assuming precise parameters.
	
	In this paper, we extend this theory to the \emph{time reversal} of imprecise Markov chains, that is, to studying the behaviour of a process when the direction of time is reversed. This work builds upon two earlier conference	publications: \cite{skulj2025}, which introduced initial results on time reversal in the imprecise setting, and \cite{vskulj2025random}, which analysed	weighted random walks with interval weights as a motivating example for	reversible imprecise Markov chains and developed some preliminary general	results. In a reversed Markov chain, the dependency structure—such as the
	Markov property—acquires a new interpretation: instead of predicting the	future from the past, one infers the past from the future. A chain is therefore	\emph{reversible} if its stochastic law remains unchanged under time reversal.
	
	Reversible chains are fundamental to Monte Carlo methods, especially MCMC, because imposing a detailed balance with a chosen target distribution provides a direct, easily verifiable guarantee that the target is stationary~\cite{green1995reversible,hastings1970monte,jerrum1996markov}. The same property underlies the key results of queueing theory~\cite{Asmussen2003}, network theory~\cite{kelly2011reversibility}, and the applications in statistical physics and information geometry~\cite{WolferWatanabe2021}.

	In the present work we do not directly address algorithmic applications but rather develop a general theoretical framework that characterises when an imprecise process can be considered reversible and how such processes can be represented through joint distribution matrices.
	We construct the time reversal of imprecise Markov chains under \emph{strong independence}, and leave the more general case of \emph{epistemic irrelevance} for future work. Strong independence allows to describe an imprecise chain as a \emph{compatible set} of \emph{inhomogeneous precise chains}. We reverse each precise chain individually and obtain the reversed imprecise process by collecting the reversals of all members of the compatible set.
	
	Although this is possible in principle, several challenges arise when describing the reversed processes. Using an example, we show that the usual \emph{forward representation}, which propagates a set of initial distributions through a set of transition matrices, is unsuitable for inversion. Instead, we introduce a symmetric view based on sets of two-step \emph{joint distribution} (or \emph{edge-measure}) matrices. This representation naturally takes inversion into account and is strictly more expressive than the forward view. Another advantage is that transition probabilities (obtained by conditioning on marginals) are never explicitly required and thus no assumption of strictly positive marginals is needed. Although in practise transition probabilities are the most natural modelling primitives, the joint distribution formulation provides a mathematically symmetric representation that avoids assuming positive marginals or invertible transitions. This allows time reversal and reversibility to be expressed directly in terms of observable joint behaviour rather than conditional probabilities.
	
	While the main goal of this paper is to establish the theoretical foundations of time reversal and reversibility for imprecise Markov chains, we also discuss their numerical aspects. Each collection of joint matrices forms a credal set, and this structure is preserved under reversal. Consequently, evaluating lower and upper expectations of multi-step path functionals can be formulated as a \emph{multilinear programming} problem. We outline how this general formulation includes the two-step case as a special instance and, in principle, extends to longer paths, although with rapidly increasing computational complexity.
	
	The rest of the paper is organised as follows: Section~\ref{s-pn} recalls the notation and tools needed to analyse reversed imprecise Markov chains, including the reversal of inhomogeneous precise chains. Section~\ref{s-jdmmc} introduces the symmetric modelling approach based on joint distribution matrices for both homogeneous and inhomogeneous chains. Section~\ref{s-imc} gives an overview of imprecise Markov chains, outlines the three most common interpretations, and then focuses on strong independence, which forms the basis for the rest of the paper. Section~\ref{s-rimc} examines reversed imprecise Markov chains, represented by sets of joint distribution matrices, and derives matrix-based conditions for reversibility. Section~\ref{s-rwwg} applies the general theory to an important special case of random walks on weighted graphs, whose weights are now allowed to be given by intervals. Section~\ref{s-nc} discusses how expectations with respect to joint distributions can be estimated in practise as optimisation problems. Finally, Section~\ref{s-co} summarises our contributions and points out directions for future work.

	\section{Preliminaries and notation}\label{s-pn}
	
	\subsection{Notation}
	Let \(\states\) be a non-empty finite set, the \emph{state space} of 
	Markov chains considered, and let \(|\states|\) denote its cardinality.
	By \(\NN\) we denote the set of natural and by 
	\(\RR\) the set of real numbers.
	
	All vectors are assumed to be \emph{row} vectors, typically denoted by
	\(q,r,\dots\) and interpreted as probability mass functions. Their
	components are thus non-negative: we write \(q\ge 0\) for component-wise
	non-negativity and \(q>0\) when every component is strictly positive.
	Further, \(q^{\top}\) denotes the transposed vector.  Unless stated otherwise, all vectors have dimension \(|\states|\)
	and can be regarded as real-valued functions
	\(\states\to\RR\).  The constant-one vector of length \(|\states|\) is
	denoted \(\mathbf 1\).
	
	Capital letters \(P,Q,\dots\) will denote
	\(|\states|\!\times\!|\states|\) matrices, with \(P(x,y)\) the element in
	row \(x\) and column \(y\).  The diagonal matrix with diagonal elements \(q\)  is denoted by \(\diag(q)\).  
	A matrix \(P\) is \emph{stochastic} when its
	rows are probability vectors, i.e.\ \(P\mathbf 1^{\top}=\mathbf
	1^{\top}\).
	
	For \(m,n\in\NN\) we write \(X_{m:n}:=(X_m,\dots,X_n)\); if \(n<m\) this
	means the reversed sequence \((X_m,X_{m-1},\dots,X_n)\).  Throughout we
	consider only finite segments of a chain.  Unless a different start time
	is important, we simply write \(X_{1:N}\) for some \(N\in\NN\).

	\subsection{Markov chains}
	Now we review basic properties of Markov chains. For more detailed introduction, reader is referred to classical textbooks, e.g.,  \cite{KemenySnell1976, norris:97}.  
	
	\subsubsection{Basic model}
	A sequence of random variables \((X_1,X_2,\dots)\) taking values in the
	finite state space \(\states\) is a \emph{discrete-time Markov chain} if,
	for every \(n\in\NN\) and \(x_1,\dots,x_{n+1}\in\states\),
	\begin{align}
		\label{eq:MC}
		\Pr\bigl(X_{n+1}=x_{n+1}\mid X_n=x_n,\dots,X_1=x_1\bigr)
		\;=\;
		\Pr\bigl(X_{n+1}=x_{n+1}\mid X_n=x_n\bigr)
		\;=:\;
		P_n(x_n,x_{n+1}),
	\end{align}
	where \(P_n(x,y)\) is the \emph{one-step transition probability}
	from state \(x\) to state \(y\) at time step \(n\).
	Collecting these probabilities yields the row-stochastic
	\(|\states|\!\times\!|\states|\) \emph{transition matrix} \(P_n\).
	
	The chain is \emph{time-homogeneous} when \(P_n = P\) for all
	\(n\in\NN\), where $P$ is fixed transition matrix.  
	Let \(q^{(1)}\) denote the \emph{initial distribution},
	\(q^{(1)}(x)=\Pr(X_1=x)\).
	For a homogeneous chain the distribution at time \(n\ge 1\) is
	\[
	q^{(n)} \;=\; q^{(1)} P^{\,n-1},
	\]
	while for an inhomogeneous chain it is
	\(q^{(n)} = q^{(1)} P_1P_2\cdots P_{n-1}\).
	
	Although Markov chains are often analysed on an infinite time horizon,
	our focus will be on finite segments \((X_1,\dots,X_N)\), for which the
	transition matrices \(P_1,\dots,P_{N-1}\) may vary with time.

	\subsubsection{Stationary time-homogeneous chains}
	A time-homogeneous chain is \emph{stationary} if the marginal
	distribution of \(X_n\) is the same for every \(n\).
	Equivalently, at every time $n$ the distribution \(\pi\) of $X_n$ is an \emph{invariant} distribution for the
	transition matrix \(P\): 
	\[
	\pi P \;=\; \pi .
	\]
	On a finite state space at least one invariant distribution exists.  If
	\(P\) is also \emph{ergodic}—that is, irreducible and aperiodic—the
	invariant distribution is unique and strictly positive, and the law of
	\(X_n\) converges to \(\pi\) regardless of the initial state.
	With fixed marginal distributions, attention naturally shifts from the analysis of marginal distributions
	to the joint probability law.
	
	The notion of stationarity extends to \emph{imprecise} Markov chains, where the credal sets corresponding to marginal distributions 
	are invariant under	the imprecise transition operator~\cite{decooman-2008-a,skulj:09IJAR}.
	
	\subsubsection{Reversible Markov chains}
	Let \((X_{1:N})\) be a time-homogeneous Markov chain with transition
	matrix \(P\) and stationary distribution \(\pi\).
	The reversed sequence \((X_{N:1})\) is itself again a Markov chain, with
	transition probabilities
	\begin{equation}
		\label{eq:reverse-transition}
		P^{*}(x,y)\;=\;\frac{\pi(y)\,P(y,x)}{\pi(x)},
	\end{equation}
	or, in matrix form,
	\(P^{*}=\operatorname{diag}(\pi)^{-1} P^{\top}\operatorname{diag}(\pi)\).
	
	The reversed chain shares the stationary distribution \(\pi\).  This
	construction assumes \(\pi(x)>0\) for all \(x\).  If the original chain is irreducible and aperiodic, these properties are preserved under reversal.
	If \(P^{*}(x,y)=P(x,y)\) for all \(x,y\in\states\), the chain is
	said to be \emph{reversible}.  From~\eqref{eq:reverse-transition}, we simply deduce that reversibility is
	equivalent to the \emph{detailed-balance} condition
	\begin{equation}
		\label{eq:detailed-balance}
		\pi(x)\,P(x,y)=\pi(y)\,P(y,x)
		\qquad(x,y\in\states).
	\end{equation}
	
	Reversibility can also be characterised by symmetry of path
	probabilities (cf.\ \cite[Chapter 1.2]{kelly2011reversibility}).
	
	\begin{theorem}\label{thm:rev-equiv}
		Let \((X_n)_{n\in\NN}\) be a \emph{stationary} Markov chain on the finite
		state space \(\states\) with transition matrix \(P\) and stationary
		distribution \(\pi\).
		The following statements are equivalent:
		\begin{enumerate}[(i)]
			\item \textbf{Detailed balance:}
			\(\pi(x)\,P(x,y)=\pi(y)\,P(y,x)\) for all \(x,y\in\states\).
			\item \textbf{Two-step symmetry:}
			\(\Pr(X_1=x_1,X_2=x_2)=\Pr(X_1=x_2,X_2=x_1)\)
			for all \(x_1,x_2\in\states\).
			\item \textbf{Finite-path symmetry:}
			\(\Pr(X_{1:N}=x_{1:N})=\Pr(X_{N:1}=x_{1:N})\)
			for every \(N\in\NN\) and every \(x_{1:N}\in\states^{N}\).
		\end{enumerate}
		Hence the chain is reversible if and only if any (and therefore all) of
		these conditions hold.
	\end{theorem}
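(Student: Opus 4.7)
The plan is to establish the cycle (iii) $\Rightarrow$ (ii) $\Rightarrow$ (i) $\Rightarrow$ (iii), so that each implication is reasonably clean and the heavy lifting falls only on the last step.

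The implication (iii) $\Rightarrow$ (ii) is immediate: specialise the finite-path symmetry to $N=2$. For (ii) $\Rightarrow$ (i), I would expand the two-step joint probabilities using stationarity and the Markov property: $\Pr(X_1=x_1,X_2=x_2)=\pi(x_1)\,P(x_1,x_2)$ and similarly $\Pr(X_1=x_2,X_2=x_1)=\pi(x_2)\,P(x_2,x_1)$. Equating the two via (ii) produces the detailed-balance equation \eqref{eq:detailed-balance} directly.

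The main work is in (i) $\Rightarrow$ (iii). For a fixed path $x_{1:N}$, write the forward probability via stationarity and the Markov property as
\begin{equation*}
\Pr(X_{1:N}=x_{1:N})\;=\;\pi(x_1)\prod_{k=1}^{N-1} P(x_k,x_{k+1}).
\end{equation*}
Apply detailed balance termwise, rewriting each factor $\pi(x_k)P(x_k,x_{k+1})$ as $\pi(x_{k+1})P(x_{k+1},x_k)$. The cleanest way to organise this is to insert a telescoping product $\prod_{k=1}^{N-1}\pi(x_{k+1})/\pi(x_{k+1})=1$ and rewrite the forward probability as
\begin{equation*}
\pi(x_N)\prod_{k=1}^{N-1}\frac{\pi(x_k)P(x_k,x_{k+1})}{\pi(x_{k+1})}
\;=\;\pi(x_N)\prod_{k=1}^{N-1}P(x_{k+1},x_k),
\end{equation*}
where the second equality uses detailed balance in each factor. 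Reindexing the product shows this equals $\pi(x_N)\,P(x_N,x_{N-1})\cdots P(x_2,x_1)$, which is precisely $\Pr(X_1=x_N,X_2=x_{N-1},\ldots,X_N=x_1)=\Pr(X_{N:1}=x_{1:N})$ under the notational convention for reversed index ranges. Positivity of $\pi$ is not required here because any path visiting a state outside the support of $\pi$ has probability zero on both sides.

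I expect the main subtlety to be bookkeeping rather than mathematical: keeping the indexing of the reversed path straight with the convention $X_{N:1}=(X_N,\ldots,X_1)$, and arguing cleanly that the telescoping argument survives even when some of the $\pi(x_k)$ vanish (trivially, since stationarity then forces both sides to be zero). Everything else is routine manipulation of the Markov factorisation together with the detailed-balance relation.
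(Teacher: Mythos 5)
Your proof is correct, and it follows the standard argument: the paper itself offers no proof of this theorem, deferring to Kelly (Chapter 1.2), where the same cycle of implications with the telescoping detailed-balance computation is used. Your handling of the degenerate case (paths through states with $\pi(x_k)=0$ have probability zero on both sides by stationarity) correctly covers the only point where the division in the telescoping step could fail.
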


	\subsection{Time reversal for time-inhomogeneous chains}
	
	The reverse of a stationary Markov chain does not depend on a specific starting point---therefore we can typically assume unlimited sequences. 
	However, for non-stationary or inhomogeneous processes, reversal is only meaningful within finite time horizon, as the terminal distribution is needed to calculate the reversed transitions. Given a finite-horizon chain \((X_{1:N})\), its reversed chain is
	\((X_{N:1})\), and although technically this is merely a re-indexing, it is important to analyse which structural properties, such as the Markov property, carry over.
	
	To extend reversal to the imprecise setting we work with
	\emph{time-inhomogeneous} Markov chains and therefore restrict attention to finite
	horizons \(N\).  For \(k=1,\dots,N-1\) let \(P_{k}\) denote the transition matrix at time
	\(k\).  With initial distribution \(q^{(1)}\) the marginal at time \(k\)
	is
	\begin{equation}\label{eq:dist-at-k}
		q^{(k)} \;=\; q^{(1)} P_{1}\cdots P_{k-1},
		\qquad k=1,\dots,N.
	\end{equation}
	We call \(\Gamma=(q^{(1)},P_{1:N-1})\) the \emph{transition law} of
	\((X_{1:N})\).
	
	The detailed balance condition does not extend directly to the inhomogeneous case, but
	we can still define a \emph{time-reversed transition law}.  Throughout
	this subsection all probability mass functions are assumed to have full
	support, i.e.\ \(q(x)>0\) for every \(x\in\states\).
	
	\begin{definition}[$q$-reverse]\label{def:q-reverse}
		For a stochastic matrix \(P\) and a strictly positive mass function
		\(q\), the \emph{$q$-reverse} of \(P\) is
		\[
		P^{*}_{q}\;:=\;\operatorname{diag}(qP)^{-1}\,P^{\top}\,
		\operatorname{diag}(q).
		\]
	\end{definition}
	
	If \(\pi\) is a stationary distribution of \(P\), then
	\(P^{*}_{\pi}=P^{*}\), the usual homogeneous time reverse.
	
	\begin{proposition}\label{prop:q-reverse-stochastic}
		Let \(P\) be a stochastic matrix and \(q>0\) with \(qP>0\).
		Then \(P^{*}_{q}\) is stochastic.
	\end{proposition}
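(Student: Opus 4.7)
The plan is to verify the two defining properties of a stochastic matrix---non-negative entries and row sums equal to one---directly from the entrywise formula for $P^{*}_{q}$. First I would unfold the definition to obtain
\[
P^{*}_{q}(x,y)\;=\;\frac{q(y)\,P(y,x)}{(qP)(x)},
\]
which is well defined because the hypothesis $qP>0$ ensures that $\diag(qP)$ is invertible. Non-negativity of each entry is then immediate: the numerator is a product of two non-negative numbers (since $q\ge 0$ and $P\ge 0$) and the denominator is strictly positive.

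For the row sums, I would fix $x\in\states$ and compute
\[
\sum_{y\in\states} P^{*}_{q}(x,y)
\;=\;\frac{1}{(qP)(x)}\sum_{y\in\states} q(y)\,P(y,x)
\;=\;\frac{(qP)(x)}{(qP)(x)}\;=\;1,
\]
using the fact that $\sum_{y} q(y)\,P(y,x)$ is by definition the $x$-component of the row vector $qP$. Thus $P^{*}_{q}\mathbf 1^{\top}=\mathbf 1^{\top}$.

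There is essentially no obstacle here; the only thing to be careful about is the role of the positivity assumption. Strict positivity of $q$ is used implicitly to give $qP\ge 0$ the chance to be strictly positive, but what is actually required for the construction is $qP>0$, which is assumed explicitly so that $\diag(qP)^{-1}$ exists. Neither step requires $P$ itself to have any special structure beyond row-stochasticity, so the result holds in the stated generality.
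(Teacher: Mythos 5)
Your proof is correct and follows essentially the same route as the paper's: the paper verifies the row-sum identity in matrix form via $P^{*}_{q}\mathbf 1^{\top}=\operatorname{diag}(qP)^{-1}(qP)^{\top}=\mathbf 1^{\top}$, while you carry out the identical computation entrywise, with non-negativity noted in the same way. There is no substantive difference.
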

	
	\begin{proof}
		All matrices are well defined and non-negative.  Moreover,
		\[
		P^{*}_{q}\mathbf 1^{\top}
		\;=\;
		\operatorname{diag}(qP)^{-1}P^{\top}\operatorname{diag}(q)\mathbf 1^{\top}
		\;=\;
		\operatorname{diag}(qP)^{-1}(qP)^{\top}
		\;=\;
		\mathbf 1^{\top},
		\]
		so each row of \(P^{*}_{q}\) sums to one.
	\end{proof}
	
	The next result shows that \(P^{*}_{q}\) maps the image of \(q\) back to
	\(q\) and that the reversal operation is an involution.
	
	\begin{proposition}\label{prop:q-reverse-involution}
		For a stochastic matrix \(P\) and \(q>0\) with \(qP>0\):
		\begin{enumerate}[(i)]
			\item \((qP)\,P^{*}_{q}=q\);
			\item \(\bigl(P^{*}_{q}\bigr)^{*}_{\,qP}=P\).
		\end{enumerate}
	\end{proposition}
	
	\begin{proof}
		(i) Using Definition~\ref{def:q-reverse},
		\[
		qPP^{*}_{q}
		\;=\;
		(qP)\operatorname{diag}(qP)^{-1}P^{\top}\operatorname{diag}(q)
		\;=\;
		\mathbf 1\,P^{\top}\operatorname{diag}(q)
		\;=\;
		(P\mathbf 1^{\top})^{\!\top}\operatorname{diag}(q)
		\;=\;
		q.
		\]
		
		(ii) By (i) we have \(qPP^{*}_{q}=q\).  Hence
		\[
		(P^{*}_{q})^{*}_{\,qP}
		\;=\;
		\operatorname{diag}(qPP^{*}_{q})^{-1}
		(P^{*}_{q})^{\top}\operatorname{diag}(qP)
		\;=\;
		\operatorname{diag}(q)^{-1}
		\bigl(\operatorname{diag}(qP)^{-1}P^{\top}\operatorname{diag}(q)\bigr)^{\top}
		\operatorname{diag}(qP)
		\;=\;
		P.
		\]
	\end{proof}
	
	\begin{definition}[Reverse transition law]\label{def:rev-law}
		Let \(\Gamma=(q^{(1)},P_{1:N-1})\) be a transition law with \(q^{(1)}>0\)
		and \(q^{(k)}>0\) for all \(k\) as in~\eqref{eq:dist-at-k}.  Define
		\[
		\Gamma^{*}:=\bigl(q^{(N)},\,P^{*}_{N-1:1}\bigr),
		\qquad
		P^{*}_{k}:=(P_{k})^{*}_{\,q^{(k)}}
		=\operatorname{diag}\!\bigl(q^{(k+1)}\bigr)^{-1}
		P_{k}^{\top}\operatorname{diag}\!\bigl(q^{(k)}\bigr).
		\]
		We call \(\Gamma^{*}\) the \emph{reverse transition law} of \(\Gamma\).
	\end{definition}
	
	\begin{remark}
		For \(N=2\) we have \(\Gamma=(q,P)\) and
		\(\Gamma^{*}=(qP,P^{*}_{q})\).
	\end{remark}
	
	\begin{theorem}\label{thm:inhomog-reversal}
		Let \((X_{1:N})\) be an inhomogeneous Markov chain with transition law
		\(\Gamma=(q^{(1)},P_{1:N-1})\) satisfying \(q^{(k)}>0\) for every
		\(k\).  Then the reversed chain \((X_{N:1})\) is Markov with transition
		law \(\Gamma^{*}\) of Definition~\ref{def:rev-law}.
	\end{theorem}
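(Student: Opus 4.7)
The plan is to verify the statement by showing that the full joint law of $(X_{N:1})$ factorises in the canonical form of a Markov chain whose initial distribution is $q^{(N)}$ and whose transition matrices are $P^*_{N-1},P^*_{N-2},\dots,P^*_{1}$, in that order. Since a finite sequence of random variables is Markov precisely when its joint mass function has such a product form, establishing this factorisation delivers both the Markov property of the reversed chain and the identification of its transition law $\Gamma^{*}$.

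The first step is to extract from Definition~\ref{def:q-reverse} the entrywise identity
\[
q^{(k)}(x)\,P_{k}(x,y)\;=\;q^{(k+1)}(y)\,P^{*}_{k}(y,x)\qquad(x,y\in\states,\;k=1,\dots,N-1),
\]
which is immediate from $P^{*}_{k}=\operatorname{diag}(q^{(k+1)})^{-1}P_{k}^{\top}\operatorname{diag}(q^{(k)})$ and is well defined because $q^{(k)}>0$ for all $k$ by assumption. This is the inhomogeneous analogue of detailed balance and plays the role of the single exchange rule needed for the telescoping.

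Next I would write the forward joint as
\[
\Pr(X_{1:N}=x_{1:N})\;=\;q^{(1)}(x_{1})\,\prod_{k=1}^{N-1}P_{k}(x_{k},x_{k+1}),
\]
and apply the exchange identity to the leftmost pair $q^{(1)}(x_{1})P_{1}(x_{1},x_{2})$, then to the newly exposed $q^{(2)}(x_{2})P_{2}(x_{2},x_{3})$, and so on. After $N-1$ iterations the factor $q^{(1)}$ has migrated to become $q^{(N)}(x_{N})$ and each $P_{k}(x_{k},x_{k+1})$ has been replaced by $P^{*}_{k}(x_{k+1},x_{k})$, yielding
\[
\Pr(X_{1:N}=x_{1:N})\;=\;q^{(N)}(x_{N})\,\prod_{k=1}^{N-1}P^{*}_{k}(x_{k+1},x_{k}).
\]
Relabelling $y_{j}:=x_{N-j+1}$ this is exactly the joint mass of a Markov chain with initial distribution $q^{(N)}$ and transition matrices $P^{*}_{N-1},\dots,P^{*}_{1}$ applied in that order, which is the chain law $\Gamma^{*}$.

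I expect no genuine obstacle here: the argument is essentially bookkeeping around the single identity above. The only subtleties worth flagging in the write-up are (a) the strict positivity of every $q^{(k)}$, which is needed for $\operatorname{diag}(q^{(k+1)})^{-1}$ to exist and is guaranteed by hypothesis, and (b) the fact that the initial marginal of the reversed chain is indeed $q^{(N)}$, which is consistent because summing the displayed joint over $x_{1},\dots,x_{N-1}$ reproduces $q^{(N)}(x_{N})$ by Proposition~\ref{prop:q-reverse-stochastic} applied repeatedly (each $P^{*}_{k}$ is stochastic and hence its rows sum to one).
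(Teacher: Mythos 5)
Your proof is correct and follows essentially the same route as the paper: both rest on the identity \(q^{(k)}(x)\,P_{k}(x,y)=q^{(k+1)}(y)\,P^{*}_{k}(y,x)\) (the paper phrases it as the Bayes computation of \(\Pr(X_{k-1}=x_{k-1}\mid X_k=x_k)\)) and conclude by exhibiting the reversed factorisation \(q^{(N)}(x_N)\prod_{k=1}^{N-1}P^{*}_{k}(x_{k+1},x_k)\) of the joint law. Your telescoping presentation, with the explicit appeal to the product-form characterisation of Markovianity and to Proposition~\ref{prop:q-reverse-stochastic}, is just a slightly more explicit write-up of the same argument.
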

	
	\begin{proof}
		For \(k=2,\dots,N\) and states \(x_{k-1},x_{k}\), we have
		\begin{align*}
		\Pr(X_{k-1}=x_{k-1}\mid X_{k:N}=x_{k:N}) 
		& \;=\;
		\frac{\Pr(X_{k-1:N}=x_{k-1:N})}{\Pr(X_{k:N}=x_{k:N})} \\
		& \;=\;
		\frac{q^{(k-1)}(x_{k-1}) P_{k-1}(x_{k-1}, x_k) \cdots P_{N-1}(x_{N-1}, x_N)}{q^{(k)}(x_{k}) P_{k}(x_{k}, x_{k+1}) \cdots P_{N-1}(x_{N-1}, x_N)}\\
		& \;=\;		
		\frac{P_{k-1}(x_{k-1},x_{k})\,q^{(k-1)}(x_{k-1})}{q^{(k)}(x_{k})}
		\;=\;
		P^{*}_{k-1}(x_{k},x_{k-1}),
		\end{align*}
		which is precisely the \((k-1)\)-st reversed matrix and shows that the reversed process satisfies the Markov property. Multiplying these
		matrices yields the joint distribution
		\[
		\Pr(X_{N:1}=x_{N:1})
		\;=\;
		q^{(N)}(x_{N})\,
		P^{*}_{N-1}(x_{N},x_{N-1})\cdots P^{*}_{1}(x_{2},x_{1}),
		\]
		so \((X_{N:1})\) obeys the transition law \(\Gamma^{*}\).
	\end{proof}
	
	\begin{remark}\label{rem:reversal-notes}
		\mbox{} 
		\begin{enumerate}
			\item Each reversed matrix \(P^{*}_{k}\) depends on the initial
			distribution \(q^{(1)}\) (via \(q^{(k)}\)) as well as on the
			forward matrices \(P_{1},\dots,P_{k}\).
			\item Even if the forward chain is homogeneous (\(P_{k}\equiv P\)), the
			reversed chain is generally \emph{inhomogeneous} unless
			\(q^{(1)}\) equals the stationary distribution \(\pi\) of \(P\);
			in that case \(P^{*}_{k}\equiv P^{*}_{\pi}=P^{*}\) and the
			reversed chain is homogeneous.
		\end{enumerate}
	\end{remark}

\section{Joint-Distribution Matrices for Markov Processes}\label{s-jdmmc}

For a \emph{precise} stationary Markov chain, reversibility is
characterised by two equivalent conditions: the
\emph{detailed-balance} equalities~\eqref{eq:detailed-balance} and the
\emph{two-step path symmetry} of the chain
(Theorem~\ref{thm:rev-equiv}).  
Our aim is to extend this notion to \emph{imprecise} Markov chains.
Detailed balance ties the stationary distribution at each ordered pair
of states to the corresponding transition probabilities and therefore
does not generalise naturally to credal sets, where such pointwise
constraints are too restrictive.  
By contrast, path symmetry translates to two-step credal sets directly by requiring that they remain invariant under swapping its
coordinates.

\subsection{Definition and basic identities}

\begin{definition}[Joint distribution (edge-measure) matrix]
	Let \(q\) be a probability mass function on the finite state space
	\(\states\) and \(P\) a stochastic transition matrix.
	The \emph{joint distribution matrix} associated with the two-step chain
	\((X_1,X_2)\) is
	\[
	Q_{q,P}(x,y):=q(x)\,P(x,y), \qquad x,y\in\states .
	\]
\end{definition}

When the context is clear we simply write \(Q\).
If \(q>0\) then \(P\) can be recovered from \(Q\) via
\(P(x,y)=Q(x,y)/q(x)\).

In the theory of reversible chains the matrix
\(Q(x,y)=\pi(x)P(x,y)\) is standard; it is often called the
\emph{edge measure}~\cite{LevinPeresWilmer2017,WolferWatanabe2021} or, in
the electrical-network analogy, the \emph{conductance} (or
\emph{flow}) matrix~\cite{DoyleSnell1984}.
We adopt this joint matrix representation in place of the classical
forward pair \((q,P)\); it proves more convenient for studying
reversibility and extends naturally to the imprecise setting.

\begin{proposition}\label{prop:q-matrix}
	Let \(Q=Q_{q,P}\) with \(q>0\).  Then
	\begin{enumerate}[(i)]
		\item \(Q=\operatorname{diag}(q)\,P\);
		\item \(P=\operatorname{diag}(q)^{-1}\,Q\);
		\item \(q=\mathbf 1\, Q^\top\) \quad (row sums);
		\item \(qP=\mathbf 1\,Q\) \quad (column sums);
		\item the joint matrix for the reversed pair \((X_2,X_1)\) is \(Q^{\top}\).
	\end{enumerate}
\end{proposition}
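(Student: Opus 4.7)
The proposition is a collection of straightforward bookkeeping identities, each of which unfolds directly from the defining equality $Q(x,y)=q(x)\,P(x,y)$. I expect the whole proof to fit in a few lines of routine matrix algebra, with no genuine obstacle; strict positivity of $q$ is used only where $\operatorname{diag}(q)$ must be inverted.

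First I would dispatch (i) by simply expanding the matrix product entry-wise: $(\operatorname{diag}(q)\,P)(x,y)=q(x)P(x,y)=Q(x,y)$. Claim (ii) then follows by premultiplying (i) by $\operatorname{diag}(q)^{-1}$, which is well-defined precisely because every component of $q$ is strictly positive.

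For (iii), I would compute the row sums of $Q$: the $x$-th entry of $Q\mathbf 1^{\top}$ is $\sum_y q(x)P(x,y)=q(x)\sum_y P(x,y)=q(x)$, where the last equality uses that $P$ is stochastic. For (iv), the column sums give the $y$-th entry of $\mathbf 1\,Q$ as $\sum_x q(x)P(x,y)$, which is exactly the $y$-th component of $qP$.

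Finally, for (v), I would recognise the joint matrix of the swapped pair $(X_2,X_1)$ by evaluating its $(y,x)$-entry: it equals $\Pr(X_2=y,X_1=x)=\Pr(X_1=x,X_2=y)=Q(x,y)=Q^{\top}(y,x)$. Hence the joint-distribution matrix of $(X_2,X_1)$ is $Q^{\top}$, which completes the proof.
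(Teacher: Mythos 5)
Your proof is correct and consists of exactly the routine entrywise verifications that the paper itself leaves implicit (it states Proposition~\ref{prop:q-matrix} without a written proof, treating it as immediate from the definition \(Q(x,y)=q(x)P(x,y)\) and stochasticity of \(P\)). Your use of \(q>0\) only for inverting \(\operatorname{diag}(q)\) in (ii) is precisely the role positivity plays in the paper's surrounding discussion, so there is nothing to add.
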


Whenever \(q>0\) the mapping \((q,P)\mapsto Q\) in
Proposition~\ref{prop:q-matrix}\,(i) is bijective, because
\(\operatorname{diag}(q)^{-1}\) exists and retrieves \(P\) from \(Q\).
If some component \(q(x)\) equals zero, uniqueness fails: the \(x\)-row
of \(Q\) is identically zero, so any stochastic vector may be chosen for
the \(x\)-row of \(P\) without affecting \(Q\).
Nevertheless, for \emph{every} joint matrix \(Q\) one can find at least
one pair \((q,P)\) with \(Q=\operatorname{diag}(q)P\); for instance take
\(q=Q\,\mathbf 1^{\top}\) and define
\[
P(x,\cdot)=
\begin{cases}
	Q(x,\cdot)/q(x), & q(x)>0,\\
	\text{any stochastic vector}, & q(x)=0.
\end{cases}
\]
Thus the forward and joint representations coincide exactly when
\(q\) has full support; otherwise the forward pair is a
(non-unique) refinement of \(Q\).

\begin{corollary}
	For a stochastic matrix \(P\) with a unique stationary distribution
	\(\pi>0\), the following are equivalent:
	\begin{enumerate}[(i)]
		\item \(P\) is reversible with respect to \(\pi\);
		\item the joint matrix \(Q_{\pi,P}\) is symmetric
		\(\bigl(Q_{\pi,P}=Q_{\pi,P}^{\top}\bigr)\).
	\end{enumerate}
\end{corollary}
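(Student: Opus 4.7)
The plan is to reduce the equivalence to the detailed balance characterisation of reversibility already recorded in Theorem~\ref{thm:rev-equiv}. By definition, $Q_{\pi,P}(x,y)=\pi(x)P(x,y)$, so the symmetry condition $Q_{\pi,P}=Q_{\pi,P}^{\top}$ is literally the equation
\[
\pi(x)P(x,y)=\pi(y)P(y,x)\qquad(x,y\in\states),
\]
which is the detailed balance condition \eqref{eq:detailed-balance}. Thus the content of the corollary is almost a tautology once the joint matrix is unfolded.

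Concretely, I would proceed in two short steps. First, assuming reversibility with respect to $\pi$, invoke Theorem~\ref{thm:rev-equiv}(i) to obtain detailed balance, and then read off $Q_{\pi,P}(x,y)=Q_{\pi,P}(y,x)$ for every pair $(x,y)$, concluding $Q_{\pi,P}=Q_{\pi,P}^{\top}$. Second, for the converse, start from $Q_{\pi,P}=Q_{\pi,P}^{\top}$, rewrite entrywise as $\pi(x)P(x,y)=\pi(y)P(y,x)$, and apply Theorem~\ref{thm:rev-equiv} in the other direction to conclude reversibility. The hypothesis $\pi>0$ is not strictly needed for this equivalence per se (both sides vanish when $\pi(x)=0$ anyway), but it is the natural setting where $P$ is recoverable from $Q_{\pi,P}$ via Proposition~\ref{prop:q-matrix}(ii), which is why it is included.

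Since both directions are single-line manipulations, there is no real obstacle; the only thing worth emphasising in the write-up is that this corollary is the conceptual bridge motivating the rest of the section — it shows that \emph{matrix symmetry of the joint distribution} is a faithful algebraic surrogate for reversibility, which is precisely the property that will be generalised to credal sets of joint matrices in the imprecise setting.
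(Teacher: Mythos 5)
Your proof is correct and matches the paper's (implicit) argument exactly: the corollary is just the observation that symmetry of \(Q_{\pi,P}(x,y)=\pi(x)P(x,y)\) is entrywise identical to the detailed-balance condition \eqref{eq:detailed-balance}, hence equivalent to reversibility via Theorem~\ref{thm:rev-equiv}. Your side remark about \(\pi>0\) being needed only to recover \(P\) from \(Q_{\pi,P}\) is also consistent with the paper's discussion following Proposition~\ref{prop:q-matrix}.
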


Hence a precise chain is reversible \emph{iff} its joint distribution
matrix is symmetric.

\subsection{Application to inhomogeneous chains}

We now extend the joint distribution representation to finite-horizon,
not necessarily time-homogeneous, Markov chains of arbitrary length.
Strict positivity of the marginal distributions is \emph{not} required.

\begin{proposition}\label{prop:p_Gamma}
	Let \(\Gamma=(q^{(1)},P_{1:N-1})\) be the transition law of a
	time-inhomogeneous Markov chain \((X_{1:N})\) on a finite state
	space~\(\states\).
	For \(k=1,\dots,N-1\) set
	\(q^{(k)}:=q^{(1)}P_{1}\cdots P_{k-1}\) and define
	\[
	Q_{k}:=\operatorname{diag}\!\bigl(q^{(k)}\bigr)\,P_{k}.
	\]
	Then, for every path \(x_{1:N}\in\states^{N}\),
	\begin{equation}\label{eq:jpf-factor}
		p_{\Gamma}(x_{1:N})
		\;=\;
		\frac{\displaystyle\prod_{k=1}^{N-1}Q_{k}(x_{k},x_{k+1})}
		{\displaystyle\prod_{k=2}^{N-1}q^{(k)}(x_{k})},
	\end{equation}
	with the convention \(0/0:=0\).  Moreover,
	\begin{equation}\label{eq:qk-marginals}
		\mathbf 1\, Q_{k}^\top=q^{(k)},
		\qquad
		\mathbf 1\,Q_{k}=q^{(k+1)},
		\qquad k=1,\dots,N-1 .
	\end{equation}
\end{proposition}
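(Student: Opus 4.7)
My plan is to split the proposition into two independent pieces, the path-factorisation identity~\eqref{eq:jpf-factor} and the marginal identities~\eqref{eq:qk-marginals}, and treat them in turn. The second is essentially a one-line matrix calculation; the first is where the real content sits, and where I expect a small subtlety to arise.

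For~\eqref{eq:jpf-factor} I would first unfold the definition of $Q_k$ and regroup the numerator as
\[
\prod_{k=1}^{N-1} Q_k(x_k,x_{k+1}) \;=\; \prod_{k=1}^{N-1} q^{(k)}(x_k)\,\cdot\,\prod_{k=1}^{N-1} P_k(x_k,x_{k+1}).
\]
In the generic regime where $q^{(k)}(x_k)>0$ for every $k\in\{2,\dots,N-1\}$, the denominator is strictly positive and the interior marginal factors cancel, leaving $q^{(1)}(x_1)\prod_{k=1}^{N-1} P_k(x_k,x_{k+1})$, which is exactly $p_{\Gamma}(x_{1:N})$ by the usual Markov factorisation. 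This is the easy direction and uses nothing beyond the definitions.

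The main obstacle is the degenerate case in which $q^{(k)}(x_k)=0$ for some $k\in\{2,\dots,N-1\}$, where one has to reconcile the convention $0/0:=0$ with the probabilistic meaning of the left-hand side. The $k$-th factor of the denominator is zero, and so is $Q_k(x_k,x_{k+1}) = q^{(k)}(x_k)P_k(x_k,x_{k+1})$ in the numerator, so the convention forces the right-hand side to $0$; I then need to check that $p_{\Gamma}(x_{1:N}) = 0$. For this I would use the marginal decomposition
\[
q^{(k)}(x_k) \;=\; \sum_{y_{1:k-1}} q^{(1)}(y_1)\prod_{j=1}^{k-1} P_j(y_j,y_{j+1})\Big|_{y_k=x_k}.
\]
Because every summand is non-negative and the total is zero, each summand vanishes; in particular the one indexed by $y_{1:k-1}=x_{1:k-1}$ is zero, and multiplying through by $P_k(x_k,x_{k+1})\cdots P_{N-1}(x_{N-1},x_N)$ gives $p_{\Gamma}(x_{1:N})=0$, as required.

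Finally, the marginal identities~\eqref{eq:qk-marginals} reduce to direct algebra: $Q_k\mathbf 1^{\top} = \operatorname{diag}(q^{(k)})\,P_k\mathbf 1^{\top} = \operatorname{diag}(q^{(k)})\mathbf 1^{\top}$ because $P_k$ is row-stochastic, returning $q^{(k)}$; and $\mathbf 1\,Q_k = q^{(k)}P_k = q^{(k+1)}$ directly from the recurrence defining the marginal distributions. No further subtleties arise for this part.
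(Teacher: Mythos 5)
Your proposal is correct and follows essentially the same route as the paper: both verify the marginal identities by the same one-line matrix computation and obtain the factorisation by relating $Q_k(x_k,x_{k+1})$ to $q^{(k)}(x_k)P_k(x_k,x_{k+1})$ in the product rule, handling the zero-marginal case via the convention $0/0:=0$. The only difference is that where the paper simply asserts that $q^{(k)}(x_k)=0$ forces $p_{\Gamma}(x_{1:N})=0$, you justify it explicitly through the non-negative sum decomposition of $q^{(k)}(x_k)$ --- a worthwhile but minor elaboration of the same argument.
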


\begin{proof}
	\emph{Marginal identities.}
	Because each \(P_{k}\) is stochastic
	(\(\mathbf 1 P_{k}^{\top}=\mathbf 1\)),
	\[
	\mathbf 1 \, Q_{k}^{\top}
	=\operatorname{diag}\!\bigl(q^{(k)}\bigr)\mathbf 1\,P_{k}^\top
	=q^{(k)},
	\quad
	\mathbf 1\,Q_{k}
	=\mathbf 1\operatorname{diag}\!\bigl(q^{(k)}\bigr)P_{k}
	=q^{(k)}P_{k}=q^{(k+1)},
	\]
	which proves~\eqref{eq:qk-marginals}.
	
	\smallskip
	\noindent\emph{Factorisation.}
	The standard product rule gives
	\begin{equation}\label{eq:p_Gamma}
		p_{\Gamma}(x_{1:N})
		\;=\;
		q^{(1)}(x_{1})
		\prod_{k=1}^{N-1}P_{k}(x_{k},x_{k+1}).
	\end{equation}
	For each \(k\) with \(q^{(k)}(x_{k})>0\),
	\(P_{k}(x_{k},x_{k+1})=
	Q_{k}(x_{k},x_{k+1})/q^{(k)}(x_{k})\).
	If \(q^{(k)}(x_{k})=0\) the entire path probability is zero, which we
	interpret as \(Q_{k}(x_{k},x_{k+1})/q^{(k)}(x_{k})=0\).
	Substituting these fractions into~\eqref{eq:p_Gamma} and cancelling
	\(q^{(1)}(x_{1})\) yields~\eqref{eq:jpf-factor}.
\end{proof}

\begin{corollary}\label{cor:p_Gamma-reverse-pos}
	Assume the forward law
	\(\Gamma=(q^{(1)},P_{1:N-1})\) satisfies \(q^{(k)}>0\) for
	\(k=1,\dots,N\).
	Let \(\Gamma^{*}=(q^{(N)},P^{*}_{N-1:1})\) be the reverse law and put
	\(Q_{k}=\operatorname{diag}\!\bigl(q^{(k)}\bigr)P_{k}\).
	Then, for every path \(x_{1:N}\in\states^{N}\),
	\[
	p_{\Gamma^{*}}(x_{1:N})
	=\frac{\displaystyle\prod_{k=1}^{N-1}Q_{k}^{\top}(x_{k},x_{k+1})}
	{\displaystyle\prod_{k=2}^{N-1}q^{(k)}(x_{k})}.
	\]
\end{corollary}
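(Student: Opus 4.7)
The plan is to apply Proposition~\ref{prop:p_Gamma} directly to the reverse law~$\Gamma^{*}$, viewed as a transition law in its own right, and then translate the resulting marginals and joint matrices back into the forward-law quantities.

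First, I would identify the marginals of~$\Gamma^{*}$. Because its initial distribution is~$q^{(N)}$ and the successive transition matrices are $P^{*}_{N-1},P^{*}_{N-2},\dots,P^{*}_{1}$, iterated application of the identity $(qP)\,P^{*}_{q}=q$ from Proposition~\ref{prop:q-reverse-idempotent}(i) shows that the $k$-th marginal of~$\Gamma^{*}$ equals $q^{(N-k+1)}$. In particular every such marginal is strictly positive, so the conditional probabilities implicit in the formula are well defined.

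Next, I would compute the joint matrices of~$\Gamma^{*}$. By definition the $k$-th such matrix is $\operatorname{diag}(q^{(N-k+1)})\,P^{*}_{N-k}$; substituting the explicit form $P^{*}_{N-k}=\operatorname{diag}(q^{(N-k+1)})^{-1}\,P_{N-k}^{\top}\,\operatorname{diag}(q^{(N-k)})$ from Definition~\ref{def:rev-law} makes the two $\operatorname{diag}(q^{(N-k+1)})$ factors cancel, leaving $P_{N-k}^{\top}\,\operatorname{diag}(q^{(N-k)})=Q_{N-k}^{\top}$. Hence the joint matrices of the reverse law are precisely the transposes of the forward joint matrices, listed in reversed order.

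Plugging these quantities into factorisation~\eqref{eq:jpf-factor} of Proposition~\ref{prop:p_Gamma} applied to~$\Gamma^{*}$ then produces a product of transposed forward joint matrices in the numerator over a product of forward marginals in the denominator, yielding the stated formula after the routine re-indexing $k\leftrightarrow N-k+1$. The $0/0:=0$ convention of Proposition~\ref{prop:p_Gamma} plays no role here thanks to the strict-positivity assumption $q^{(k)}>0$.

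The only delicate point is keeping the index reversal consistent between the two products; once the key algebraic identity $\tilde Q_{k}=Q_{N-k}^{\top}$ for the reverse-law joint matrices is in place, the computation is essentially immediate and no further ideas beyond Propositions~\ref{prop:p_Gamma} and~\ref{prop:q-reverse-idempotent} are required.
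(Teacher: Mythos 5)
Your proposal is correct and follows essentially the same route as the paper's own (much terser) proof: apply Proposition~\ref{prop:p_Gamma} to \(\Gamma^{*}\) and observe that the joint matrices of the reverse law are the transposes \(Q_{k}^{\top}\) of the forward ones. You merely make explicit the steps the paper leaves implicit—identifying the marginals of \(\Gamma^{*}\) via Proposition~\ref{prop:q-reverse-idempotent}(i) and carrying out the cancellation \(\operatorname{diag}(q^{(k+1)})P^{*}_{k}=Q_{k}^{\top}\)—so no new ideas or gaps are involved.
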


\begin{proof}
	Because each \(q^{(k)}\) is positive, every
	\(\operatorname{diag}\!\bigl(q^{(k)}\bigr)\) is invertible, and
	\(P^{*}_{k}=(P_{k})^{*}_{q^{(k)}}\) is well defined.  Applying
	Proposition~\ref{prop:p_Gamma} to \(\Gamma^{*}\) and Proposition~\ref{prop:q-matrix}(v), which gives
	\(Q^{*}_{k}=Q_{k}^{\top}\), completes the proof.
\end{proof}

\smallskip\noindent
A single time step of a (possibly inhomogeneous) Markov chain can thus
be described either by the forward pair \((q,P)\) or by its
joint distribution matrix \(Q=\operatorname{diag}(q)\,P\).
Consequently, an \(N\)-step transition law can be represented by a
\emph{sequence} of joint matrices
\(Q_{1},Q_{2},\dots,Q_{N-1}\).
These matrices cannot be chosen arbitrarily: the marginal distributions
of adjacent matrices must coincide.

\begin{definition}[Marginal compatibility]
	Joint distribution matrices \(Q_{1}\) and \(Q_{2}\) on \(\states\) are
	\emph{marginally compatible} if
	\[
	\mathbf 1\,Q_{1} \;=\; \mathbf 1 Q_{2}^{\top},
	\]
	i.e.\ the marginal of the second coordinate under \(Q_{1}\) equals the
	marginal of the first coordinate under \(Q_{2}\).
\end{definition}

Working with joint matrices removes the need for inverting
\(\operatorname{diag}(q^{(k)})\); hence, strict positivity matters only when a
one-to-one correspondence with the forward description is desired.

\begin{corollary}\label{cor:gamma-q-equivalence}
	Assume all marginals of \(\Gamma\) in
	Proposition~\ref{prop:p_Gamma} are strictly positive.
	Then
	\[
	\Gamma \;\longmapsto\; Q_{1:N-1}
	\]
	is a bijection between transition laws with positive marginals and
	sequences of joint matrices \(Q_{1:N-1}\) that are pairwise marginally
	compatible and have positive row and column sums.
\end{corollary}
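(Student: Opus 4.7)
The plan is to exhibit the inverse map explicitly and check that the two constructions are mutual inverses, with the only non-trivial step being an induction that uses marginal compatibility to propagate the identification of marginals.

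First, I would verify that the forward map $\Gamma \mapsto Q_{1:N-1}$ lands in the target set. Given $\Gamma = (q^{(1)}, P_{1:N-1})$ with all marginals $q^{(k)} > 0$, set $Q_k := \operatorname{diag}(q^{(k)}) P_k$. Proposition~\ref{prop:p_Gamma} immediately yields $Q_k\mathbf{1}^\top = q^{(k)}$ and $\mathbf{1}\,Q_k = q^{(k+1)}$, so the row and column sums are strictly positive and $\mathbf{1}\,Q_k = q^{(k+1)} = Q_{k+1}\mathbf{1}^\top$ for $k = 1,\dots,N-2$, which is exactly marginal compatibility.

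Next, I would construct the inverse. Given a sequence $Q_{1:N-1}$ that is pairwise marginally compatible with strictly positive row sums $r_k := Q_k\mathbf{1}^\top$ and column sums $c_k := \mathbf{1}\,Q_k$, define
\[
q^{(1)} := r_1, \qquad P_k := \operatorname{diag}(r_k)^{-1} Q_k \quad (k = 1,\dots,N-1).
\]
Each $P_k$ is well defined because $r_k > 0$, and $P_k\mathbf{1}^\top = \operatorname{diag}(r_k)^{-1}Q_k\mathbf{1}^\top = \mathbf{1}^\top$, so $P_k$ is stochastic. The substantive step is an induction showing that the marginals generated by $\Gamma := (q^{(1)}, P_{1:N-1})$ coincide with the row sums $r_k$. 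The base case $q^{(1)} = r_1$ holds by construction. For the inductive step, assume $q^{(k)} = r_k$; then
\[
q^{(k+1)} = q^{(k)} P_k = r_k \operatorname{diag}(r_k)^{-1} Q_k = \mathbf{1}\,Q_k = c_k = r_{k+1},
\]
where the final equality is marginal compatibility $c_k = r_{k+1}$. Hence all marginals of $\Gamma$ are strictly positive, so $\Gamma$ lies in the stated domain.

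Finally, I would verify that the two maps are mutually inverse. Starting from $\Gamma$ and applying the two constructions in turn returns $Q_k = \operatorname{diag}(q^{(k)}) P_k$, then $r_k = q^{(k)}$ and $\operatorname{diag}(r_k)^{-1} Q_k = P_k$, reproducing $\Gamma$. Starting from $Q_{1:N-1}$ and going forward again, the inductive identification $q^{(k)} = r_k$ gives $\operatorname{diag}(q^{(k)}) P_k = \operatorname{diag}(r_k)\operatorname{diag}(r_k)^{-1}Q_k = Q_k$. There is no real obstacle here; the only point that requires attention is making marginal compatibility do the work in the inductive step so that the distribution $q^{(1)}P_1\cdots P_{k-1}$ evolves in lockstep with the row-sum sequence $r_k$.
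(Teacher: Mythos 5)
Your proof is correct and follows essentially the same route as the paper: the forward direction via Proposition~\ref{prop:p_Gamma}, and the inverse obtained by taking row sums and row-normalising each \(Q_k\). The only difference is that you spell out the induction showing the propagated marginals \(q^{(k)}\) coincide with the row sums \(r_k\) via marginal compatibility, a step the paper's proof leaves implicit.
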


\begin{proof}
	Forward direction: uniqueness and marginal compatibility follow directly from
	Proposition~\ref{prop:p_Gamma}.
	For the converse, let \((Q_{1},\dots,Q_{N-1})\) satisfy the stated
	conditions.  Define \(q^{(k)}:=\mathbf 1\, Q_{k}^\top > 0\)  and
	\(P_{k}:=\operatorname{diag}\!\bigl(q^{(k)}\bigr)^{-1}Q_{k}\); this yields
	a unique transition law \(\Gamma=(q^{(1)},P_{1:N-1})\).
\end{proof}

\begin{corollary}\label{cor:reverse-Q}
	Let \(\Gamma=(q^{(1)},P_{1:N-1})\) correspond to
	\(Q_{1:N-1}\) as in Corollary~\ref{cor:gamma-q-equivalence}.
	The reversed law
	\(\Gamma^{*}=(q^{(N)},P^{*}_{N-1:1})\)
	corresponds to the \emph{transposed} sequence
	\(Q^{\top}_{N-1:1}\), which is again marginally compatible.
\end{corollary}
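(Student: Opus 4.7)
The plan is to verify the two assertions separately: first that the joint matrices associated with $\Gamma^{*}$ are exactly $Q^{\top}_{N-1},Q^{\top}_{N-2},\dots,Q^{\top}_{1}$, and second that this transposed sequence is marginally compatible. Both are essentially bookkeeping once the right indexing is set up.

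First I would pin down the marginal distributions under $\Gamma^{*}$. By Definition~\ref{def:rev-law} the reverse law starts at $q^{(N)}$ and applies $P^{*}_{N-1},P^{*}_{N-2},\dots,P^{*}_{1}$ in turn; combining Proposition~\ref{prop:q-reverse-idempotent}\,(i) with induction on $k$ shows that the marginal of $\Gamma^{*}$ at reverse-time $k$ is simply $q^{(N-k+1)}$. The $k$-th joint matrix of the reversed law is therefore
\[
\tilde Q_{k}\;:=\;\operatorname{diag}\!\bigl(q^{(N-k+1)}\bigr)\,P^{*}_{N-k}.
\]
Substituting the definition of $P^{*}_{N-k}$ from Definition~\ref{def:rev-law}, the left diagonal factor cancels the inverse that appears inside $P^{*}_{N-k}$, so
\[
\tilde Q_{k}
\;=\;P_{N-k}^{\top}\operatorname{diag}\!\bigl(q^{(N-k)}\bigr)
\;=\;\bigl(\operatorname{diag}\!\bigl(q^{(N-k)}\bigr)\,P_{N-k}\bigr)^{\top}
\;=\;Q^{\top}_{N-k}.
\]
This is exactly the first claim. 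The strict positivity of the marginals, inherited from the hypothesis of Corollary~\ref{cor:gamma-q-equivalence}, is used only to guarantee that the inverse diagonal matrix inside $P^{*}_{N-k}$ is defined.

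For marginal compatibility of the transposed sequence I would verify $\mathbf 1\,\tilde Q_{k}=\tilde Q_{k+1}\,\mathbf 1^{\top}$ for $k=1,\dots,N-2$. By Proposition~\ref{prop:p_Gamma}, $\mathbf 1\,Q^{\top}_{N-k}$ is the row-sum vector of $Q_{N-k}$, namely $q^{(N-k)}$, whereas $Q^{\top}_{N-k-1}\,\mathbf 1^{\top}$ is the column-sum vector of $Q_{N-k-1}$, which is also $q^{(N-k)}$ by marginal compatibility of the original sequence. Hence compatibility of the reversed sequence is an immediate consequence of compatibility of the original.

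The only step requiring care is the re-indexing between forward time $k$ and reverse time $N-k+1$; once this is set up correctly, the computation reduces to applying Definition~\ref{def:rev-law} and the identities in Proposition~\ref{prop:p_Gamma}, with no genuine analytic obstacle.
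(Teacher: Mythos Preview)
Your proof is correct and follows essentially the same route as the paper. The paper does not write out a separate proof of this corollary: the identity \(Q^{*}_{k}=Q_{k}^{\top}\) is already noted in the proof of Corollary~\ref{cor:p_Gamma-reverse-pos}, and the present statement is then immediate from Corollary~\ref{cor:gamma-q-equivalence}. Your argument simply makes the bookkeeping (the reverse-time marginals via Proposition~\ref{prop:q-reverse-idempotent}(i), the cancellation yielding \(\tilde Q_{k}=Q_{N-k}^{\top}\), and the marginal-compatibility check) fully explicit, which is exactly what the paper leaves to the reader.
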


Thus every (possibly inhomogeneous) transition law \(\Gamma\) can be
encoded by a marginally compatible sequence of joint distribution
matrices \(Q_{1:N-1}\).  
Time reversal is achieved simply by reversing the sequence and
transposing each matrix:
\[
\Gamma_{Q_{1:N-1}}^{*} \;=\; \Gamma_{Q_{N-1:1}^{\top}}.
\]

\section{Imprecise Markov chains}\label{s-imc}

\subsection{Modelling imprecision in probability distributions}

\emph{Imprecise probabilities} \cite{augustin2014introduction, walley:91}
represent uncertainty by \emph{sets} of probability distributions rather
than a single “precise” mass function.  On a finite state space
\(\states\), an imprecise distribution is any non-empty set
\[
\csete \;\subseteq\;
\Bigl\{\,p\mid \states\!\to\![0,1]\;\bigm|\;
\textstyle\sum_{x\in\states}p(x)=1\Bigr\}.
\]
We reserve the symbol \(\cset\) for \emph{credal sets}, namely those
\(\csete\) that are both \emph{closed} and \emph{convex}.  Credal sets
allow convenient algebraic representations and dual characterisations.

Any (not necessarily convex) set \(\csete\) of probability mass
functions induces the \emph{lower} and \emph{upper expectations}
\[
\low E_{\csete}(f) \;=\; \inf_{p\in\csete}E_p[f],
\qquad
\up E_{\csete}(f) \;=\; \sup_{p\in\csete}E_p[f],
\]
where \(f:\states\!\to\!\RR\) is a \emph{gamble} (real-valued functional) and
\(E_p[f]=\sum_{x\in\states}p(x)f(x)\). The $\inf$ and $\sup$ clearly become $\min$ and $\max$ respectively if the set $\csete$ is closed, such as is the case with a credal set. In the literature these
functionals are also called \emph{lower} and \emph{upper previsions}
\cite{miranda2008}.

For a credal set \(\cset\) the lower expectation can be evaluated
efficiently by linear programming, and the mapping
\(f\mapsto\low E_{\cset}(f)\) \emph{uniquely characterises}
\(\cset\):
\[
\cset
\;=\;
\Bigl\{\,p \mid E_p[f]\ge \low E_{\cset}(f)\text{ for all }f\Bigr\}.
\]

\subsection{Modelling imprecision in Markov chains}

A classical (precise) Markov chain on a finite state space \(\states\) is
fully specified by an initial distribution \(q^{(1)}\) on \(\states\)
and a single transition matrix \(P\).
To account for parameter uncertainty, an \emph{imprecise Markov chain}
replaces the single matrix by a \emph{set} of row-stochastic matrices
\[
\tmatrices \;\subseteq\;
\{\,P:\states\times\states\!\to\![0,1] \mid
P\mathbf 1^{\top}=\mathbf 1^{\top}\,\},
\]
usually assumed to be closed and convex.
Likewise, the initial distribution is assumed to lie in a credal set
\(\cset\).
The true transition matrix is unknown but assumed to belong to
\(\tmatrices\).

Throughout this paper, an \emph{imprecise Markov chain} refers to a collection of stochastic processes whose one-step transitions (at every time $n$) are selected—possibly depending on time or history—from $\tmatrices$.
The exact nature of this collection depends on the adopted interpretation of imprecision each imposing different assumptions on how the matrices are selected over time.

Three principal interpretations appear in the literature:

\smallskip
\noindent\textbf{Sensitivity analysis}
\cite{krein:03,utkin:02}.  
The chain is time-homogeneous with some unknown
\(P\in\tmatrices\); the compatible processes are precisely these
homogeneous chains.

\smallskip
\noindent\textbf{Strong independence} \cite{skulj:09IJAR}.  
At each time \(n\) the transition matrix \(P_{n}\) may vary
arbitrarily within \(\tmatrices\) but is independent of the past
beyond \(X_{n}\).  
A law is specified by a sequence
\((P_{1:N-1})\in\tmatrices^{N-1}\), so the compatible processes are
time-inhomogeneous Markov chains.

\smallskip
\noindent\textbf{Epistemic irrelevance} \cite{decooman-2008-a}.  
At time \(n\) the choice of transition matrix may depend on the entire
history \(x_{1:n}\); for each history one selects a probability vector
from a set \(\tmatrices_{x_{n}}\) that governs \(X_{n+1}\).
The resulting family is even larger and is no longer Markovian
in general.

These models satisfy the strict inclusions
\[
\{\text{Sensitivity}\}\;\subset\;
\{\text{Strong independence}\}\;\subset\;
\{\text{Epistemic irrelevance}\},
\]
and each yields a set \(\distr(X_{1:N})\) of joint distributions
on finite paths, allowing inferences about marginals or other
functionals via lower and upper expectations.

A key computational requirement is that \(\tmatrices\) have
\emph{convex, separately specified rows}: each row
\(P(\cdot\mid x)\) can be selected independently from a convex set
\(\tmatrices_{x}\).
This property guarantees that the induced marginal credal sets are
convex and that lower and upper expectations remain tractable under
both strong independence and epistemic irrelevance (see e.g., \cite{krak2019hitting, skulj:09IJAR}).

\subsection{Imprecise Markov chains as sets of processes}

Each interpretation described above defines a family of \emph{compatible
	precise processes}, which need not be time-homogeneous or even
Markovian.  By a \emph{set of processes} we mean the collection of all
probability laws on (finite or infinite) state sequences whose one-step
transition matrices lie in \(\tmatrices\).

For a finite horizon \((X_{1:N})\) a compatible process is determined by
a probability mass function on \(\states^{N}\).  We denote the set of all such
\emph{compatible joint distributions} by
\[
\distr(X_{1:N})
:=\bigl\{\,p(x_{1:N}) : p
\text{ arises from some choice of transition matrices in }
\tmatrices\bigr\}.
\]

\paragraph{Sensitivity analysis (precise chains with uncertain
	parameters).}
Every compatible process is a homogeneous Markov chain with some
\(P\in\tmatrices\) and \(q^{(1)}\in\cset\):
\[
p^{\mathrm{s}}_{q^{(1)},P}(x_{1:N})
= q^{(1)}(x_{1})\,P(x_{1},x_{2})\cdots P(x_{N-1},x_{N}),
\qquad
\distr(X_{1:N})
=\bigl\{\,p^{\mathrm{s}}_{q^{(1)},P}
\mid  q^{(1)}\in\cset,\;P\in\tmatrices\bigr\}.
\]

\paragraph{Strong independence.}
At each time \(k\) a matrix \(P_{k}\in\tmatrices\) is chosen independently
of the past beyond \(X_{k}\).  A process is specified by
\(\Gamma=(q^{(1)},P_{1:N-1})\in\cset\times\tmatrices^{N-1}\) with joint
law
\[
p_{\Gamma}(x_{1:N})
= q^{(1)}(x_{1})\,P_{1}(x_{1},x_{2})\cdots
P_{N-1}(x_{N-1},x_{N}),
\qquad
\distr(X_{1:N})
=\bigl\{\,p_{\Gamma} \mid  \Gamma\in\cset\times\tmatrices^{\,N-1}\bigr\}.
\]

\paragraph{Epistemic irrelevance.}
Here the choice of transition probabilities may depend on the entire
history: for each past \(x_{1:k}\) one selects a probability vector
\(P_{k}(x_{1:k-1},x_{k},\cdot)\in\tmatrices_{x_k}\).  Writing
\(\Delta=(q^{(1)},P_{1:N-1})\) for such a collection, the joint mass
function is
\begin{equation}\label{eq:p-delta}
	p_{\Delta}(x_{1:N})
	= q^{(1)}(x_{1})\,
	P_{1}(x_{1},x_{2})\,\cdots\,
	P_{N-1}(x_{1:N-2},x_{N-1},x_{N}),
\end{equation}
and
\[
\distr(X_{1:N})
=\{\,p_{\Delta} \mid  \Delta\text{ ranges over all admissible sequences}\}.
\]

Across all three interpretations an imprecise Markov chain is the set
of its compatible precise processes.  Under the sensitivity-analysis and
strong-independence interpretations these processes remain Markovian, whereas
under epistemic irrelevance they generally do not.  This distinction is
crucial for time reversal: the reversal methods developed earlier rely
on the Markov property and therefore apply only to the first two
interpretations.

\subsection{Stationarity for imprecise Markov chains}

Let \((X_n)\) be an imprecise Markov chain determined by a transition
set \(\tmatrices\) and an initial credal set
\(\cset:=\distr(X_1)\).  By analogy with the precise case, the chain is
called \emph{stationary} when the marginal credal sets remain constant:
\[
\distr(X_k)=\distr(X_1),
\qquad k=2,3,\dots .
\]

\paragraph{Sensitivity interpretation.}
For the sensitivity analysis interpretation we have
\[
\distr(X_n)
=\{\,qP^{\,n-1} \mid  q\in\cset,\;P\in\tmatrices\},
\]
a set that is usually hard to characterise
\cite{krein:03, utkin:02}.  If every matrix in \(\tmatrices\) is
\emph{regular}, then as \(n\to\infty\) these sets converge to the
compact, convex set of stationary distributions of matrices in
\(\tmatrices\) \cite[Chap.~3]{hart:98}.

\paragraph{Strong independence and epistemic irrelevance.}
Under either interpretation a credal set \(\Pi\) is
\emph{stationary} when
\begin{equation}\label{eq:invariant-set}
	\Pi=\Pi\tmatrices:=\{\,qP \mid  q\in\Pi,\;P\in\tmatrices\}.
\end{equation}
A stationary set always exists \cite[Thm.~2]{skulj:09IJAR} but need not
be unique unless \(\tmatrices\) is \emph{regular} (all members are
regular).  A weaker condition guaranteeing uniqueness is given in
\cite{decooman-2008-a}; the general non-unique case is analysed in
\cite{skulj:13LAA}.

Consequently, an imprecise Markov chain can always be made stationary by
choosing an initial credal set that satisfies
\eqref{eq:invariant-set}.  If strict positivity is required, the chain
must satisfy the regularity (ergodicity) condition.

\subsection{Stationary imprecise Markov chains under strong independence via joint distribution matrices}\label{s-simcjdm}

In the usual forward representation an imprecise chain is specified by an
initial credal set \(\cset^{(1)}\) and a set of transition matrices
\(\tmatrices\).  Throughout this subsection we adopt the
\emph{strong-independence} interpretation: at each step \(k\) the chain
may employ any \(P_{k}\in\tmatrices\), independently of the past.  This
description is directional, propagating uncertainty only forward.

For tasks that demand time symmetry, such as stationarity and
reversibility, it is advantageous to work with the symmetric
\emph{joint matrix} representation based on the distribution of two
consecutive states.  We therefore switch to that view (still under strong
independence) and study its interaction with stationarity and time
reversal.

For any set \(\mathcal Q\) of two-step joint distribution matrices define
the associated \emph{left-} and \emph{right-marginal credal sets}
\begin{equation}\label{eq:general-left-right}
	\cset_{\mathcal Q}
	:=\{\mathbf 1\, Q^\top\mid Q\in\mathcal Q\},
	\qquad
	\cset_{\mathcal Q}^{*}
	:=\{\,\mathbf 1 Q \mid  Q\in\mathcal Q\}.
\end{equation}
Every \(Q\in\mathcal Q\) with \(\mathbf 1\, Q^\top>0\) determines a
transition matrix
\begin{equation}\label{eq:general-row-norm}
	P_{Q}:=\operatorname{diag}\!\bigl(\mathbf 1\, Q^\top\bigr)^{-1}Q,
\end{equation}
and we set \(\tmatrices_{\mathcal Q}:=\{P_{Q}\mid Q\in\mathcal Q\}\).

Conversely, given a forward description \((\cset^{(1)},\tmatrices)\) under
strong independence, put
\(\cset^{(k)}:=\cset^{(1)}\tmatrices^{\,k-1}\).
For each \(k=1,\dots ,N-1\) define
\begin{equation}\label{eq:def-Qk-set}
	\mathcal Q_{k}
	:=\{\operatorname{diag}(q)P \mid  q\in\cset^{(k)},\;P\in\tmatrices\}.
\end{equation}
Then \(\cset_{\mathcal Q_{k}}=\cset^{(k)}\),
\(\cset_{\mathcal Q_{k}}^{*}=\cset^{(k+1)}\)
(by \eqref{eq:general-left-right}), and
\(\tmatrices_{\mathcal Q_{k}}=\tmatrices\)
(by \eqref{eq:general-row-norm}).

The joint matrix model is inherently symmetric (transposition reverses
time), whereas the forward representation is not: reversing the relation
between \(\cset^{(k)}\) and \(\cset^{(k+1)}\) is generally cumbersome.

\begin{proposition}\label{prop:gamma-Q-general}
	Let \(\cset^{(1)}\) be an initial credal set and \(\tmatrices\) a set of
	transition matrices.  Fix \(N\ge 2\) and define
	\(\cset^{(k)}:=\cset^{(1)}\tmatrices^{\,k-1}\).  For each \(k\) set
	\(\mathcal Q_{k}:=\{\operatorname{diag}(q)P\mid q\in\cset^{(k)},\,P\in\tmatrices\}\).
	Every transition law
	\(\Gamma=(q^{(1)},P_{1:N-1})\) with \(q^{(1)}\in\cset^{(1)}\) and
	\(P_{k}\in\tmatrices\) generates a marginally compatible sequence
	\(Q_{1:N-1}\in\mathcal Q_{1:N-1}\) such that
	\(\Gamma_{Q_{1:N-1}}=\Gamma\).
\end{proposition}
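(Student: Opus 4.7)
The plan is a direct construction: starting from $\Gamma=(q^{(1)},P_{1:N-1})$ with $q^{(1)}\in\cset^{(1)}$ and each $P_{k}\in\tmatrices$, propagate the marginals forward by setting $q^{(k)}:=q^{(1)}P_{1}\cdots P_{k-1}$ for $k=1,\dots,N$, and then define
\[
Q_{k}:=\operatorname{diag}\!\bigl(q^{(k)}\bigr)\,P_{k},\qquad k=1,\dots,N-1.
\]
The proof then splits into three verifications, to be done in order: (a) each $Q_{k}$ lies in $\mathcal{Q}_{k}$; (b) the sequence $Q_{1:N-1}$ is marginally compatible; and (c) the transition law associated with $Q_{1:N-1}$ recovers $\Gamma$.

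Step (a) is immediate from the defining expression~\eqref{eq:def-Qk-set}: by construction $q^{(k)}=q^{(1)}P_{1}\cdots P_{k-1}\in\cset^{(1)}\tmatrices^{\,k-1}=\cset^{(k)}$, and since $P_{k}\in\tmatrices$, the matrix $Q_{k}=\operatorname{diag}(q^{(k)})P_{k}$ has exactly the form used to generate $\mathcal{Q}_{k}$. Step (b) follows at once from the marginal identities~\eqref{eq:qk-marginals} of Proposition~\ref{prop:p_Gamma}: $\mathbf{1}\,Q_{k}=q^{(k)}P_{k}=q^{(k+1)}=Q_{k+1}\,\mathbf{1}^{\top}$, so adjacent joint matrices share the intermediate marginal.

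For step (c), if every $q^{(k)}$ is strictly positive, Corollary~\ref{cor:gamma-q-equivalence} supplies the bijection directly: the reconstruction $q^{(1)}=Q_{1}\mathbf{1}^{\top}$ and $P_{k}=\operatorname{diag}(q^{(k)})^{-1}Q_{k}$ returns exactly $\Gamma$. If some component $q^{(k)}(x)$ vanishes, the corresponding row of $P_{k}$ is no longer pinned down by $Q_{k}$; however, Proposition~\ref{prop:p_Gamma} together with its convention $0/0:=0$ shows that the path mass function $p_{\Gamma}(x_{1:N})$ factorises through the $Q_{k}$ regardless, so every admissible reconstruction produces the same joint law as $\Gamma$. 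Hence $\Gamma_{Q_{1:N-1}}=\Gamma$ holds in the same sense in which Corollary~\ref{cor:gamma-q-equivalence} asserts its bijection.

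The main (and essentially only) obstacle is this zero-marginal bookkeeping: one must be explicit that the equality $\Gamma_{Q_{1:N-1}}=\Gamma$ is understood modulo the freedom in choosing $P_{k}$-rows over states that have zero probability under $q^{(k)}$ — precisely the ambiguity already isolated by Corollary~\ref{cor:gamma-q-equivalence}. Beyond this clarification, the argument is nothing more than chaining the marginal identities from Proposition~\ref{prop:p_Gamma} with the closure of $\cset^{(k)}$ under one-step propagation through $\tmatrices$.
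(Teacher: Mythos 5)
Your proposal is correct and follows essentially the same route as the paper, whose proof is a one-line appeal to Proposition~\ref{prop:p_Gamma}: you simply spell out the three steps (membership of each $Q_k$ in $\mathcal Q_k$, marginal compatibility via~\eqref{eq:qk-marginals}, and recovery of $\Gamma$) that the paper leaves implicit. Your explicit remark that $\Gamma_{Q_{1:N-1}}=\Gamma$ must be read modulo the row ambiguity at zero-probability states is a welcome clarification consistent with the paper's discussion following Proposition~\ref{prop:q-matrix} and Corollary~\ref{cor:gamma-q-equivalence}.
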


\begin{proof}
	Direct consequence of Proposition~\ref{prop:p_Gamma}.
\end{proof}

\begin{remark}\label{rem:joint-strictly-more-general}
	The mapping
	\((\cset^{(1)},\tmatrices)\mapsto(\mathcal Q_{1},\dots ,\mathcal Q_{N-1})\)
	is not invertible: a sequence of joint sets fixes
	\(\cset^{(k)}=\cset_{\mathcal Q_{k}}\) and
	\(\tmatrices_{\mathcal Q_{k}}\) but need not satisfy
	\(q^{(k)}P_{k}\in\cset^{(k+1)}\) for \emph{all}
	\(q^{(k)}\in\cset^{(k)}\) and \(P_{k}\in\tmatrices_{\mathcal Q_{k}}\).
	Hence the joint matrix model is strictly more expressive than the
	forward one.
\end{remark}

\begin{example}\label{ex-joint-set-no-reverse}
	Let
	\[
	Q_{1}=\begin{pmatrix}0.1&0.2\\0.3&0.4\end{pmatrix},
	\qquad
	Q_{2}=\begin{pmatrix}0.2&0.2\\0.3&0.3\end{pmatrix},
	\qquad
	\mathcal Q=\operatorname{co}\{Q_{1},Q_{2}\}.
	\]
	Row sums are \(l_{1}=(0.3,0.7)\) and \(l_{2}=(0.4,0.6)\); column sums
	are \(r_{1}=(0.4,0.6)\) and \(r_{2}=(0.5,0.5)\), giving
	\[
	P_{1}=\begin{pmatrix}\tfrac13&\tfrac23\\[2pt]\tfrac37&\tfrac47\end{pmatrix},
	\qquad
	P_{2}=\begin{pmatrix}\tfrac12&\tfrac12\\[2pt]\tfrac12&\tfrac12\end{pmatrix}.
	\]
	But \(l_{2}P_{1}=(41/105,\,64/105)\approx(0.391,0.609)\) lies outside
	\(\operatorname{co}\{r_{1},r_{2}\}\), so \(\mathcal Q\) cannot arise from
	any single forward pair \((\cset,\tmatrices)\).
\end{example}

\begin{proposition}\label{prop:Q-constant-stationary}
	Let \(\cset\) be a credal set, \(\tmatrices\) a transition set, and
	\(\mathcal Q_{k}\) the associated joint distribution sets.
	\begin{enumerate}[(i)]
		\item If \(\cset\tmatrices=\cset\), then
		\(\mathcal Q_{k}\) is independent of \(k\).
		\item Conversely, if \(\mathcal Q_{k}\) is constant in \(k\), then
		\(\cset\tmatrices=\cset\); the chain is stationary in the forward
		sense as well.
	\end{enumerate}
\end{proposition}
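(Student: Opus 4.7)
The plan is to leverage the identification of left-marginal credal sets with the forward-propagated sets, namely that $\cset_{\mathcal Q_{k}}=\cset^{(k)}$, which was already recorded just after the definition~\eqref{eq:def-Qk-set}. Both directions then reduce to tracking how $\cset^{(k)}$ evolves.

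For part (i), I would begin by noting that the hypothesis $\cset\tmatrices=\cset$ is preserved under iteration, so $\cset^{(k)}=\cset\tmatrices^{\,k-1}=\cset$ for every $k\ge 1$ by an easy induction. Substituting this into the definition $\mathcal Q_{k}=\{\operatorname{diag}(q)P:q\in\cset^{(k)},\,P\in\tmatrices\}$ shows that each $\mathcal Q_{k}$ collapses to the single set $\{\operatorname{diag}(q)P:q\in\cset,\,P\in\tmatrices\}$, independent of $k$. This direction is essentially a one-line computation once the identity $\cset^{(k)}=\cset$ is in hand.

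For part (ii), I would use the already-established marginal identity $\cset_{\mathcal Q_{k}}=\cset^{(k)}$ (a direct consequence of \eqref{eq:general-left-right} applied to the set~\eqref{eq:def-Qk-set}, since $\operatorname{diag}(q)P\mathbf 1^{\top}=q$ by stochasticity of $P$). If $\mathcal Q_{k}$ does not depend on $k$, then neither does its left-marginal set, so $\cset^{(k)}$ is constant in $k$. Specialising to $k=1$ and $k=2$ yields $\cset=\cset^{(1)}=\cset^{(2)}=\cset\tmatrices$, which is forward stationarity.

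The only step that requires a moment of care is checking that the left marginal of $\mathcal Q_{k}$ really is all of $\cset^{(k)}$ (and not a strict subset) — but this is immediate because every $q\in\cset^{(k)}$ is realised as $\operatorname{diag}(q)P\mathbf 1^{\top}$ for any $P\in\tmatrices$. Nothing more delicate is needed; no convexity, closedness, or positivity assumption on $\cset$ or $\tmatrices$ enters the argument.
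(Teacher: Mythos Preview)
Your proposal is correct and follows essentially the same route as the paper's proof: in both directions the argument reduces to the identity \(\cset_{\mathcal Q_{k}}=\cset^{(k)}\) (row sums of \(\mathcal Q_{k}\)) together with the definition~\eqref{eq:def-Qk-set}. Your explicit check that the left marginal of \(\mathcal Q_{k}\) recovers \emph{all} of \(\cset^{(k)}\) is a welcome clarification, but otherwise the two proofs are the same.
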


\begin{proof}
	(i) If \(\cset^{(k+1)}=\cset^{(k)}\), then	\(\mathcal Q_{k+1}=\mathcal Q_{k}\) by \eqref{eq:def-Qk-set}.
	
	(ii) If \(\mathcal Q_{k}\) is constant, so are its row sums $\cset^{(k)}$.
\end{proof}

\medskip
\noindent
When the two-step joint set is constant, then clearly its row and column marginals
coincide.  Motivated by this symmetry we adopt
the following notion.

\begin{definition}[Joint-stationary chain]\label{def:joint-stationary}
	An imprecise Markov chain $(X_k)$ is said to be \emph{joint-stationary} if	there exists a single set of joint distribution matrices $\mathcal Q$ such that $\mathcal Q_k = \mathcal Q$ for all $k$. Equivalently, it represents the collection of precise processes whose two-step joint distribution matrices are all drawn from the same set $\mathcal Q$ at any time step. 
\end{definition}

Because the common joint set has equal row and column marginals, all
one-step marginal credal sets coincide:
\(\distr(X_{k})=\cset_{\mathcal Q}\) for every \(k\).
Hence Definition~\ref{def:joint-stationary} genuinely extends the usual
notion of stationarity based on forward transition sets. Note also that in the stationary case there is no need for horizon to be finite.

\section{Reversing imprecise Markov chains}\label{s-rimc}

Section~\ref{s-imc} showed that, regardless of interpretation, an imprecise Markov chain can always be viewed as a \emph{set of joint distribution laws}, Markovian or not.  We therefore begin by defining time reversal at this most general level—that is, for arbitrary sets of joint laws.  Once the general framework is in place, we specialise to the \emph{strong-independence} interpretation, where the symmetric joint matrix representation yields more detailed results.

\subsection{Reversal of joint laws}\label{s-rev-joint}
In this general setting, every stochastic process---precise or imprecise---can be represented by its set of joint distributions on finite paths.
We define time reversal directly on these joint laws, without reference to any particular generative construction.
The definition itself does not rely on the Markov property but instead works in full generality.
This general formulation will later be specialised to the reversal of imprecise Markov chains under strong independence.

Let \(X_{1:N}\) be a stochastic process on a finite state space
\(\states\) with joint distribution set
\(\distr(X_{1:N})\).

\begin{definition}
The \emph{reversal} of \(\distr(X_{1:N})\) is
\[
\distr^{*}(X_{1:N})
:=\bigl\{\,p^{*}\!\mid p\in\distr(X_{1:N})\bigr\},
\qquad
p^{*}(x_{1:N}):=p(x_{N:1}).
\]
\end{definition}
For \(N=2\) this reduces to
\(p^{*}(x_{1},x_{2})=p(x_{2},x_{1})\), i.e.~\(p^{*}=p^{\top}\).

\begin{proposition}\label{prop:rev-set}
	\(\distr^{*}(X_{1:N})=\distr(X_{N:1})\); hence the map
	\(p\mapsto p^{*}\) is a bijection between \(\distr(X_{1:N})\) and its
	reverse.
\end{proposition}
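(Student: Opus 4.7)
The plan is to prove the claimed equality by mutual inclusion and then deduce bijectivity from the fact that the sequence-reversal map is an involution. Because the definition of $\distr^{*}(X_{1:N})$ operates purely at the level of joint mass functions on $\states^{N}$ and makes no reference to transitions or to a particular interpretation (sensitivity, strong independence, or epistemic irrelevance), the argument will be uniform across all three settings developed in Section~\ref{s-imc}.

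First I would unpack the definitions. A compatible precise process in any of the three interpretations supplies a joint mass function $p$ on paths $x_{1:N}$; writing out $p^{*}(x_{1:N}) = p(x_{N:1})$ and identifying the left-hand side with $\Pr(X_{N} = x_{1},\, X_{N-1} = x_{2},\,\dots,\,X_{1} = x_{N})$ shows that $p^{*}$ is precisely the joint mass function of the re-indexed process $(X_{N:1})$ under the same compatible precise law. This gives the inclusion $\distr^{*}(X_{1:N}) \subseteq \distr(X_{N:1})$ directly.

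For the reverse inclusion, I would take any $\tilde p \in \distr(X_{N:1})$, i.e.\ the joint law of the reversed process under some compatible precise specification, and set $p(x_{1:N}) := \tilde p(x_{N:1})$. Then $p$ is the joint law of $(X_{1:N})$ under the same specification, so $p \in \distr(X_{1:N})$ and $p^{*} = \tilde p$, establishing $\distr(X_{N:1}) \subseteq \distr^{*}(X_{1:N})$.

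Finally, I would observe that the map $p \mapsto p^{*}$ on mass functions over $\states^{N}$ is an involution, since reversing a finite sequence twice recovers the original: $(p^{*})^{*} = p$. An involution on any set is automatically a bijection, and its restriction to $\distr(X_{1:N})$ therefore yields a bijection onto its image $\distr^{*}(X_{1:N}) = \distr(X_{N:1})$. There is no real obstacle here; the only point demanding care is the bookkeeping with the reversed index convention $X_{N:1}$ introduced in Section~\ref{s-pn}, to make sure the identification $p^{*}(x_{1:N}) = \Pr(X_{N:1} = x_{1:N})$ is stated without an off-by-one confusion.
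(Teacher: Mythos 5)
Your argument is correct and matches the paper's treatment: the paper states this proposition without proof precisely because it follows by the same definitional unpacking you give, namely that the law of the re-indexed process $(X_{N:1})$ under any compatible precise specification is exactly $p^{*}$ for the corresponding $p$, with bijectivity immediate from the fact that $p\mapsto p^{*}$ is an involution. Your care with the index convention is the only point of substance, and you handle it correctly.
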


\begin{corollary}\label{cor:involution}
	Reversal is an \emph{involution}:
	\(
	\bigl(\distr^{*}(X_{1:N})\bigr)^{*}=\distr(X_{1:N})
	\).
\end{corollary}

\paragraph{Reversed gambles.}
For a real-valued function (or \emph{gamble})
\(f:\states^{N}\!\to\RR\), define \(f^{*}(x_{1:N}):=f(x_{N:1})\).

\begin{proposition}\label{prop:expect-rev}
	For every credal set \(\cset\subseteq\Delta(\states^{N})\) and gamble
	\(f\),
	\[
	\underline{E}_{\cset}[f^{*}]
	=\underline{E}_{\cset^{*}}[f],
	\qquad
	\overline{E}_{\cset}[f^{*}]
	=\overline{E}_{\cset^{*}}[f].
	\]
\end{proposition}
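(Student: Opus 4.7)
The plan is to reduce the claim to a pointwise identity on expectations: for every $p \in \cset$, I would show that $E_p[f^*] = E_{p^*}[f]$. Once this is available, the equalities on lower and upper expectations follow by taking minima and maxima over $p$, using that $p \mapsto p^*$ is a bijection between $\cset$ and $\cset^*$ (which is exactly Proposition~\ref{prop:rev-set} applied with $\cset$ in place of $\distr(X_{1:N})$).

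For the pointwise identity, I would start from the definition
\[
E_p[f^*] = \sum_{x_{1:N} \in \states^N} p(x_{1:N})\,f^*(x_{1:N}) = \sum_{x_{1:N}} p(x_{1:N})\,f(x_{N:1}),
\]
and then perform the change of summation variable $y_{1:N} := x_{N:1}$, which is a bijection of $\states^N$ onto itself (since path reversal is an involution). This rewrites the sum as
\[
\sum_{y_{1:N}} p(y_{N:1})\,f(y_{1:N}) = \sum_{y_{1:N}} p^*(y_{1:N})\,f(y_{1:N}) = E_{p^*}[f],
\]
by the definition $p^*(y_{1:N}) = p(y_{N:1})$ given just above the statement.

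With this identity in hand, I conclude
\[
\underline{E}_{\cset}[f^*] = \min_{p \in \cset} E_p[f^*] = \min_{p \in \cset} E_{p^*}[f] = \min_{q \in \cset^*} E_q[f] = \underline{E}_{\cset^*}[f],
\]
where the middle equality uses the bijection $p \mapsto p^* =: q$ between $\cset$ and $\cset^*$. The upper-expectation statement follows by the same argument with $\min$ replaced by $\max$.

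There is essentially no obstacle here: the argument is a one-line change of variable plus the bijection from Proposition~\ref{prop:rev-set}. The only mild care needed is to make explicit that reversing a tuple is an involutive bijection of $\states^N$, so reindexing the sum is legitimate; everything else is a direct unpacking of the definitions.
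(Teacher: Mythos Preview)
Your proposal is correct and follows essentially the same approach as the paper: both arguments rest on the pointwise identity $E_p[f^{*}]=E_{p^{*}}[f]$ obtained by the change of summation variable $y_{1:N}=x_{N:1}$, followed by the bijection $p\mapsto p^{*}$ between $\cset$ and $\cset^{*}$. The paper simply compresses these two steps into a single displayed line, while you spell them out explicitly.
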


\begin{proof}
	For the lower expectation,
	\begin{align*}
	\underline{E}_{\cset}[f^{*}]
	& =\min_{p\in\cset}\sum_{x_{1:N}}p(x_{1:N})f(x_{N:1}) \\
	& = \min_{p\in\cset}\sum_{x_{1:N}}p(x_{N:1})f(x_{1:N}) \\
	& =\min_{p\in\cset}\sum_{x_{1:N}}p^{*}(x_{1:N})f(x_{1:N}) 
	=\underline{E}_{\cset^{*}}[f].
	\end{align*}
	The upper expectation case is analogous.
\end{proof}

\begin{proposition}\label{prop:marginal-rev}
	For each \(k=1,\dots,N\),
	\(
	\distr(X_{k})=\distr^{*}(X_{\,N-k+1})
	\).
\end{proposition}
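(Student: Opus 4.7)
The plan is to reduce the set-level equality to a pointwise marginal identity and then invoke the bijection from Proposition~\ref{prop:rev-set}. First I would fix $k\in\{1,\dots,N\}$ and observe that $\distr(X_k)$ is the image of $\distr(X_{1:N})$ under the marginalisation map $p\mapsto p(X_k=\cdot)$, while $\distr^{*}(X_{N-k+1})$ is the image of $\distr^{*}(X_{1:N})$ under marginalisation at position $N-k+1$. Because $p\mapsto p^{*}$ is a bijection between those two joint sets (Proposition~\ref{prop:rev-set}), it suffices to show that, for every $p\in\distr(X_{1:N})$, the marginal of $p^{*}$ at position $N-k+1$ equals the marginal of $p$ at position $k$.

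Next I would verify that pointwise identity by direct re-indexing. For each $x\in\states$,
\[
p^{*}(X_{N-k+1}=x)
=\sum_{x_{1:N}:\,x_{N-k+1}=x}p(x_{N:1}),
\]
and substituting $y_j:=x_{N-j+1}$ turns the summation variable into $y_{1:N}$ and the constraint into $y_k=x$. The right-hand side therefore equals $\sum_{y_{1:N}:\,y_k=x}p(y_{1:N})=p(X_k=x)$, giving the desired equality of the two marginals.

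Finally, taking the image of $\distr(X_{1:N})$ under the two matching marginalisation operators and using the bijection of Proposition~\ref{prop:rev-set} delivers $\distr(X_k)=\distr^{*}(X_{N-k+1})$. I do not expect any real obstacle here; the whole argument is a careful relabelling, entirely parallel to the lower-expectation calculation in the proof of Proposition~\ref{prop:expect-rev}. The only point deserving attention is keeping the index flip $k\mapsto N-k+1$ consistent throughout, so that the substitution $y_j=x_{N-j+1}$ matches the constraint exactly.
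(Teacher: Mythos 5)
Your proof is correct; the paper in fact states Proposition~\ref{prop:marginal-rev} without any written proof, treating it as an immediate consequence of the definition of reversal, and your re-indexing argument (marginal of $p^{*}$ at position $N-k+1$ equals marginal of $p$ at position $k$, then pass to images of the joint sets via Proposition~\ref{prop:rev-set}) is exactly the justification the paper leaves implicit. The index bookkeeping $y_j=x_{N-j+1}$, turning the constraint $x_{N-k+1}=x$ into $y_k=x$, is handled correctly.
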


\begin{corollary}\label{cor:rev-stationary}
	If \((X_{1:N})\) is stationary with common marginal credal set
	\(\Pi\), then the reversed process is also stationary with the same
	\(\Pi\).
\end{corollary}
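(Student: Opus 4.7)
The plan is to deduce the corollary directly from Proposition~\ref{prop:marginal-rev}, which already encodes the key fact that reversing the process swaps the roles of the time indices in the obvious way. Stationarity of $(X_{1:N})$ means, by definition, that the marginal credal set $\distr(X_k)$ equals a common $\Pi$ for every $k=1,\dots,N$, so the task reduces to showing that the reversed-process marginals $\distr^{*}(X_j)$ also all equal $\Pi$.

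First I would invoke Proposition~\ref{prop:marginal-rev}, which gives $\distr(X_k)=\distr^{*}(X_{\,N-k+1})$ for each $k$. Substituting the stationarity hypothesis $\distr(X_k)=\Pi$ on the left-hand side yields $\distr^{*}(X_{\,N-k+1})=\Pi$ for every $k\in\{1,\dots,N\}$. Reindexing by $j:=N-k+1$, which is a bijection of $\{1,\dots,N\}$ onto itself, I obtain $\distr^{*}(X_j)=\Pi$ for every $j$, which is precisely the statement that the reversed process is stationary with the same common marginal credal set $\Pi$.

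There is essentially no obstacle here: once Proposition~\ref{prop:marginal-rev} is in hand, the corollary is a one-line reindexing. The only point that would deserve even a brief sentence is that the time-reversal index map $k\mapsto N-k+1$ is a permutation of $\{1,\dots,N\}$, so the universal quantifier over $k$ transfers cleanly to a universal quantifier over $j$. No appeal to the internal structure of $\Pi$, to convexity, or to any Markov-type assumption is required, which is consistent with the fact that Section~\ref{s-rev-joint} deliberately develops the reversal operation at the level of arbitrary sets of joint laws.
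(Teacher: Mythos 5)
Your proof is correct and follows exactly the route the paper intends: the corollary is stated immediately after Proposition~\ref{prop:marginal-rev} as its direct consequence, and your substitution of the stationarity hypothesis followed by the reindexing $j=N-k+1$ is precisely that one-line argument. Nothing is missing.
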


\begin{definition}[Reversible finite horizon process]\label{def:rev-finite}
	A process \(X_{1:N}\) is \emph{reversible} if
	\(
	\distr^{*}(X_{1:N})=\distr(X_{1:N})
	\).
\end{definition}

Reversibility implies stationarity by
Corollary~\ref{cor:rev-stationary}, but not conversely.
For processes on an (infinite) time horizon we adopt the following definition (cf.\ \cite[§1.2]{kelly2011reversibility}).
\begin{definition}[Reversible infinite horizon process]\label{def:rev-infinite}
	A stochastic process $(X_n)_{n\in\mathbb Z}$ on 
	state space $\states$ is \emph{reversible} if, for every pair of
	integers $m\le n$, the finite subsequence
	$X_{m:n}$ is reversible in the sense of
	Definition~\ref{def:rev-finite}; that is,
	\[
	\distr^{*}(X_{m:n}) \;=\; \distr(X_{m:n}).
	\]
\end{definition}

\subsection{Reversing an imprecise Markov chain under the strong-independence interpretation}
An imprecise Markov chain can be viewed as the set of its compatible
precise processes, and, as shown above, its reverse is then the set of the reversed processes. In principle it is therefore possible to reverse any imprecise Markov chain. However, as we show in the following example, if a forward process is represented as an imprecise Markov chain in the form of a pair \((\cset,\tmatrices)\), the reversed process is generally not obtainable in the same form.

In the precise case it is well known that if a \(2\times2\) stochastic
matrix \(P\) is irreducible (and hence aperiodic), then the chain is
reversible and \(P_{\pi}^{*}=P\); see, for instance,
\cite[Example 8.29]{aldous:fill:14}.
The following example illustrates that, unlike in the precise case, the reversed imprecise process cannot generally be represented by a pair $(\Pi, \tmatrices)$ of the same form as the forward process. We construct a minimal two-state model to show that the stationary credal set for $\tmatrices$ is not preserved under reversal.

\begin{example}\label{ex-non-reversible}
	Consider the two transition matrices
	\[
	P_{1}=\begin{pmatrix}0.2&0.8\\0.7&0.3\end{pmatrix},
	\qquad
	P_{2}=\begin{pmatrix}0.6&0.4\\0.5&0.5\end{pmatrix},
	\qquad
	\tmatrices=\operatorname{co}\,\{P_{1},P_{2}\}.
	\]	
	We first compute the stationary credal set corresponding to \(\tmatrices\). Clearly every convex set closed for multiplication with the extreme points of $\tmatrices$ is also closed for multiplication with the entire convex set $\tmatrices$. Therefore, we analyse the extreme points of minimal such set that contains the stationary distributions of the extreme points. 
	Both matrices $P_1$ and $P_2$ are regular and therefore have a unique stationary
	distributions:
	\[
	\pi_{1}=(7/15,\,8/15)\approx(0.4667,0.5333),
	\qquad
	\pi_{2}=(5/9,\,4/9)\approx(0.5556,0.4444).
	\]
	Define
	\[
	u:=\pi_{2}P_{1}=(19/45,\,26/45)\approx(0.4222,0.5778),
	\qquad
	v:=\pi_{2}.
	\]
	Note that
	\(
	\pi_{1}P_{2}=(41/75,\,34/75)\approx(0.5466,0.4533)
	\)
	is a convex combination of \(\pi_{1}\) and \(\pi_{2}\), and that
	\(\pi_{2}P_{1}^{2}\) and \(\pi_{2}P_{1}P_{2}\) are convex combinations of
	\(u\) and \(v\).
	Because right multiplication by \(P_{1}\) or \(P_{2}\) is an affine map,
	every convex combination of \(u\) and \(v\) is sent to another
	convex combination of \(u\) and \(v\).
	Hence $\operatorname{co}\{u,v\}$ is closed for multiplication with $P_1$ and $P_2$, and therefore the stationary credal set for \(\tmatrices\) is exactly
	\[
	\Pi=\operatorname{co}\{u,v\}.
	\]
	Numerically, the first coordinate of any element of \(\Pi\) lies in the
	interval \([0.4222,\,0.5556]\).
	In general, the stationary credal set of an imprecise Markov chain need not be convex unless the set of transition matrices has separately specified rows, and would then only form a subset of its convex hull.
	In this two-state case, however, the stationary set is effectively one-dimensional.
	Because right multiplication by any $P\in \tmatrices$ maps every convex combination of the extreme points into another convex combination of the same points, the convex hull of these extremes is invariant under all transitions.
	Consequently, the stationary credal set must include, and indeed coincide with, all convex combinations of the two extreme stationary distributions.	
	
	Now compute the \(u\)-reverse of \(P_{1}\):
	\[
	(P_{1}^{*})_{u}=
	\begin{pmatrix}
		19/110 & 91/110 \\
		76/115 & 39/115
	\end{pmatrix}
	\approx
	\begin{pmatrix}
		0.1727 & 0.8273 \\
		0.6609 & 0.3391
	\end{pmatrix}.
	\]
	This matrix is not contained in \(\tmatrices\), so the set of
	\(\Pi\)-reverses,
	\[
	\tmatrices^{*}:=\{P_{q}^{*} \mid P\in\tmatrices,\;q\in\Pi\},
	\]
	strictly enlarges \(\tmatrices\). (Note that $\tmatrices\subseteq \tmatrices^{*}$ because every $P\in \tmatrices$ is reversible and therefore $P^*_\pi = P$, where $\pi$ is the corresponding stationary distribution.)
	
	Finally, we check whether \(\Pi\) might still be a stationary set for the enlarged transition set. Take \(w:=uP_{1}=(22/45,\,23/45)\approx(0.4889,0.5111)\); as expected,
	\(w(P_{1}^{*})_{u}=u\in\Pi\).
	However,
	\[
	v(P_{1}^{*})_{u}\approx(0.3897,0.6103),
	\]
	and since \(0.3897<0.4222\), this distribution lies outside \(\Pi\).
	Consequently, \(\Pi\) is not stationary for \(\tmatrices^{*}\).
	
	\medskip
	\noindent
	This two-state example highlights three fundamental difficulties with
	defining reversal in the ordinary \((\Pi,\tmatrices)\) framework:
	
	\begin{enumerate}[(i)]
		\item \textbf{Transition-set enlargement.}
		Because the \(u\)-reverse \((P_{1}^{*})_{u}\) is a legitimate
		transition matrix for the reversed process yet lies outside
		\(\tmatrices\), the reversed transition set must satisfy
		\(\tmatrices^{*}\supset\tmatrices\).
		Enlarging \(\tmatrices\) also enlarges its stationary credal
		set, so the reversed model is more imprecise than the
		forward one, violating the desired symmetry between the two
		directions and the involution property of the reversal operator.
		
		\item \textbf{Different stationary set.}
		Once \(\tmatrices^{*}\) is enlarged, the forward stationary set
		\(\Pi\) is no longer invariant, as demonstrated by finding
		\(v\in\Pi\) such that
		\(v(P_{1}^{*})_{u}\notin\Pi\).
		Hence \(\Pi\) cannot be stationary for the reversed process,
		contradicting Corollary~\ref{cor:rev-stationary},
		which states that the marginal credal sets should be preserved
		under time reversal.
		
		\item \textbf{Inherent issues.}
		One possible avenue to address the above difficulties would be to impose
		additional regularity conditions---for example, requiring interval bounds
		for transition matrices or that they have separately specified rows.
		However, these restrictions do not in general resolve the problem.
		A model satisfying such standard assumptions can be derived from the
		preceding example by defining the convex, separately specified credal set
		\[
		\tmatrices' = \{\, P \colon L \le P \le U \,\},
		\]
		where 
		\[
		L = \min\{P_1,P_2\} =
		\begin{pmatrix}
			0.2 & 0.4 \\ 0.5 & 0.3
		\end{pmatrix}, \quad
		U = \max\{P_1,P_2\} =
		\begin{pmatrix}
			0.6 & 0.8 \\ 0.7 & 0.5
		\end{pmatrix}.
		\]
		Clearly, $\tmatrices \subset \tmatrices'$, and hence the corresponding
		stationary sets satisfy $\Pi \subseteq \Pi'$ with $u \in \Pi'$.
		Nevertheless, the $u$-reverse of $P_1$ lies outside $\tmatrices'$,
		demonstrating that the difficulty described in~(i) persists even under
		these additional assumptions.
		This shows that the enlargement of the transition set under reversal is
		a structural property of the elementwise reversal operation itself,
		rather than an artefact of the simplified construction.
		
	\end{enumerate}
	
	Even in this minimal \(2\times2\) setting, a forward specification in
	terms of a pair \((\Pi,\tmatrices)\) cannot be reversed into another pair
	of the same form.  Therefore, a different representation is
	required to achieve true reversibility in imprecise Markov models.
\end{example}

In the next section we demonstrate that working directly with a
credal set of joint distribution matrices
avoids these problems and allows
reversibility without enlarging the model.

\subsection{Reversing an imprecise process given by a set of two-step joint distribution matrices}

Proposition \ref{prop:gamma-Q-general} and Example \ref{ex-joint-set-no-reverse}
show that representing (possibly non-homogeneous) Markov processes by a
\emph{sequence of joint distribution matrices} is strictly more expressive
than the usual forward model \((\cset,\tmatrices)\).
Example \ref{ex-non-reversible} further demonstrates that forward models are
not closed under reversal.

Because we are primarily interested in \emph{stationary} behaviour, and
Proposition \ref{prop:Q-constant-stationary} tells us that every stationary
imprecise Markov chain corresponds to a stationary process generated by a
\emph{single} joint distribution set, we adopt that framework.
Henceforth we assume the two-step sets
\(\mathcal Q_i\) are constant, and we therefore simply denote them by \(\mathcal Q\).
Allowing these sets to vary with time is possible in principle, but the
common practise in the literature on imprecise Markov chains is to keep the set fixed and let only its individual elements change.

\begin{theorem}\label{thm-reversibility-symmetry}
	Let \((X_{n})_{n\in \ZZ}\) be an imprecise Markov chain whose transition law is induced
	by a joint distribution set \(\mathcal Q\) satisfying
	\(\cset_{\mathcal Q}=\cset^{*}_{\mathcal Q}\).
	Then the chain is stationary with marginal credal set
	\(\cset_{\mathcal Q}\) and is reversible \emph{iff} \(\mathcal Q\) is
	symmetric, i.e.\ \(\mathcal Q^{\top}=\mathcal Q\).
\end{theorem}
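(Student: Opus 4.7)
The plan is to split the statement into three parts: joint-stationarity, sufficiency of symmetry for reversibility, and necessity of symmetry.

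For the stationarity claim I would invoke the observation recorded just after Definition \ref{def:joint-stationary}: the joint-distribution set is constant (identically \(\mathcal Q\)), and the hypothesis \(\cset_{\mathcal Q} = \cset_{\mathcal Q}^{*}\) makes the left and right one-step marginal credal sets coincide. Hence the chain is joint-stationary in the sense of that definition, and every one-step marginal \(\distr(X_k)\) equals \(\cset_{\mathcal Q}\).

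For the \emph{if} direction I would fix an arbitrary finite window \([m,n]\) and any compatible law \(p \in \distr(X_{m:n})\). Under strong independence, Proposition \ref{prop:p_Gamma} together with Corollary \ref{cor:gamma-q-equivalence} produces a marginally compatible sequence \(Q_m, Q_{m+1}, \ldots, Q_{n-1}\), each element of \(\mathcal Q\), whose factorisation encodes \(p\). Corollary \ref{cor:reverse-Q} then expresses the reversed law \(p^{*}\) via the reversed-transposed sequence \(Q_{n-1}^{\top}, \ldots, Q_{m}^{\top}\). Symmetry \(\mathcal Q^{\top} = \mathcal Q\) makes each transpose lie in \(\mathcal Q\), and marginal compatibility of the transposed-reversed sequence follows by taking transposes on both sides of \(\mathbf 1 Q_k = Q_{k+1}\mathbf 1^{\top}\). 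Hence \(p^{*} \in \distr(X_{m:n})\), so \(\distr^{*}(X_{m:n}) \subseteq \distr(X_{m:n})\); idempotency (Corollary \ref{cor:idempotent}) upgrades this to equality.

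For the \emph{only if} direction I would restrict to the two-step window \([k,k+1]\). By construction \(\distr(X_{k:k+1}) = \mathcal Q\): every \(Q \in \mathcal Q\) is realisable as a consecutive-pair law because the equality \(\cset_{\mathcal Q} = \cset_{\mathcal Q}^{*}\) lets one extend any chosen \(Q\) to an admissible bi-infinite sequence by iteratively picking neighbours whose row, respectively column, marginals match. Reversal of a two-step law coincides with matrix transposition, so \(\distr^{*}(X_{k:k+1}) = \mathcal Q^{\top}\), and the reversibility hypothesis forces \(\mathcal Q^{\top} = \mathcal Q\).

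The trickiest step is the sufficiency argument, where I must confirm both that the transposed-reversed sequence remains marginally compatible and that every element of \(\mathcal Q\) genuinely occurs as a two-step law somewhere in the bi-infinite process. Both verifications are routine, but each relies crucially on the interface condition \(\cset_{\mathcal Q} = \cset_{\mathcal Q}^{*}\), which furnishes the matching marginals needed to patch adjacent joint matrices together in either time direction.
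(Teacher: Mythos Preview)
Your proposal is correct and follows essentially the same route as the paper: the factorisation from Proposition~\ref{prop:p_Gamma} for the \emph{if} direction, and restriction to two-step windows for the \emph{only if} direction. You are in fact slightly more careful than the paper on two points it leaves implicit---verifying that the transposed-reversed sequence remains marginally compatible, and arguing (via the hypothesis \(\cset_{\mathcal Q}=\cset_{\mathcal Q}^{*}\)) that every \(Q\in\mathcal Q\) actually occurs as a two-step law of the bi-infinite process; the paper simply asserts the latter. One minor remark: invoking Corollary~\ref{cor:gamma-q-equivalence} is unnecessary (and that corollary assumes strictly positive marginals), since in the joint-matrix framework a compatible law already \emph{comes} as a marginally compatible sequence in \(\mathcal Q\); Proposition~\ref{prop:p_Gamma} alone suffices.
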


\begin{proof}
	If \(\mathcal Q\) is symmetric, then every compatible process over arbitrary finite horizon $N$ has joint
	distribution law
	\[
	p_{\Gamma}(x_{1:N})
	=\frac{\displaystyle\prod_{k=1}^{N-1}Q_k(x_k,x_{k+1})}
	{\displaystyle\prod_{k=2}^{N-1}q^{(k)}(x_k)},
	\]
	with \(Q_{1:N-1}\in\mathcal Q^{N-1}\) marginally compatible and
	\(q^{(k)}=\mathbf 1Q_{k}\).  Its reverse is obtained by replacing each
	\(Q_k\) with \(Q_k^{\top}\), which still lies in \(\mathcal Q\); hence
	\(\distr^{*}(X_{1:N})=\distr(X_{1:N})\). Now since every finite horizon sequence is reversible, the infinite horizon process is also reversible (cf. Definition~\ref{def:rev-infinite}).
	
	Conversely, a \emph{two step} chain is reversible exactly when its joint distribution set
	is symmetric.  If the full chain is reversible, then every two-step subchain is
	reversible, and its set of joint distribution matrices is exactly \(\mathcal Q\), which then must
	be symmetric.
\end{proof}

\begin{remark}
	No positivity assumption on the marginal distributions is
		required for a set \(\mathcal Q\) to generate reversible
		processes.
\end{remark}

\subsection{Extending to a reversible model}

The following proposition states that every imprecise Markov chain can be extended to a reversible one.

\begin{proposition}
	Let $(X_{n})_{n\in\ZZ}$ be an imprecise Markov chain whose transition law is induced by a set of joint distribution matrices $\mathcal{Q}$ satisfying $\Pi = \csete_{\mathcal{Q}} = \csete^*_{\mathcal{Q}}$. 
	Then the set $\hat{\mathcal{Q}} = \mathcal{Q} \cup \mathcal{Q}^\top$ is the smallest set of joint distribution matrices that contains $\mathcal{Q}$ and induces a reversible imprecise Markov chain.  
	
	Moreover, the corresponding reversible process $(Y_{n})_{n\in\ZZ}$ is stationary with distribution set $\Pi$.
\end{proposition}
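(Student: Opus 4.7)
My plan is to reduce everything to Theorem~\ref{thm-reversibility-symmetry}, which characterizes reversibility of a chain induced by a joint-distribution set $\mathcal{Q}'$ as the matrix symmetry $(\mathcal{Q}')^{\top}=\mathcal{Q}'$, provided the marginal-compatibility condition $\csete_{\mathcal{Q}'}=\csete^{*}_{\mathcal{Q}'}$ holds. The argument has two halves: first show that $\hat{\mathcal{Q}}$ itself meets the hypotheses of that theorem (so that it induces a reversible, stationary chain with marginal $\Pi$), and then show that no strictly smaller superset of $\mathcal{Q}$ can do the same.

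For the first half, I would verify marginal compatibility of $\hat{\mathcal{Q}}$ directly from~\eqref{eq:general-left-right}. Since transposing a joint matrix swaps its row and column sums,
\[
\csete_{\hat{\mathcal{Q}}}=\csete_{\mathcal{Q}}\cup\csete^{*}_{\mathcal{Q}}=\Pi\cup\Pi=\Pi,
\qquad
\csete^{*}_{\hat{\mathcal{Q}}}=\csete^{*}_{\mathcal{Q}}\cup\csete_{\mathcal{Q}}=\Pi,
\]
so the hypothesis of Theorem~\ref{thm-reversibility-symmetry} is satisfied with marginal credal set $\Pi$. Symmetry of $\hat{\mathcal{Q}}$ is immediate from its construction: $(\mathcal{Q}\cup\mathcal{Q}^{\top})^{\top}=\mathcal{Q}^{\top}\cup\mathcal{Q}=\hat{\mathcal{Q}}$. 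Applying Theorem~\ref{thm-reversibility-symmetry} then yields both that the induced process $(Y_n)_{n\in\ZZ}$ is reversible and that it is stationary with marginal credal set $\Pi$, which also settles the ``moreover'' clause.

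For minimality, let $\mathcal{Q}'\supseteq\mathcal{Q}$ be any set of joint-distribution matrices inducing a reversible imprecise Markov chain. By Corollary~\ref{cor:rev-stationary}, reversibility forces stationarity of its marginal credal sets, so $\csete_{\mathcal{Q}'}=\csete^{*}_{\mathcal{Q}'}$; the ``only if'' part of Theorem~\ref{thm-reversibility-symmetry} then gives $(\mathcal{Q}')^{\top}=\mathcal{Q}'$. Transposing the inclusion $\mathcal{Q}\subseteq\mathcal{Q}'$ yields $\mathcal{Q}^{\top}\subseteq(\mathcal{Q}')^{\top}=\mathcal{Q}'$, whence $\hat{\mathcal{Q}}=\mathcal{Q}\cup\mathcal{Q}^{\top}\subseteq\mathcal{Q}'$. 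The argument has no serious obstacle; the only point that requires attention is to justify that the marginal-compatibility hypothesis of Theorem~\ref{thm-reversibility-symmetry} holds both for the candidate $\hat{\mathcal{Q}}$ (via the assumption $\csete_{\mathcal{Q}}=\csete^{*}_{\mathcal{Q}}=\Pi$) and for every competitor $\mathcal{Q}'$ (via Corollary~\ref{cor:rev-stationary}), since without this hypothesis the ``iff'' of Theorem~\ref{thm-reversibility-symmetry} cannot be invoked.
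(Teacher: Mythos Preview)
Your proposal is correct and follows essentially the same route as the paper: reversibility is equivalent to symmetry of the joint set (Theorem~\ref{thm-reversibility-symmetry}), $\hat{\mathcal{Q}}=\mathcal{Q}\cup\mathcal{Q}^{\top}$ is the smallest symmetric superset of $\mathcal{Q}$, and transposition swaps left and right marginals so the marginal credal set stays $\Pi$. The paper's proof says exactly this in two sentences; you have simply unpacked the implicit appeals to Theorem~\ref{thm-reversibility-symmetry} and to the marginal-compatibility hypothesis, including the minor extra step of checking that any competitor $\mathcal{Q}'$ must itself satisfy $\csete_{\mathcal{Q}'}=\csete^{*}_{\mathcal{Q}'}$ before the ``only if'' direction can be applied.
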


\begin{proof}
	The set $\hat{\mathcal{Q}}$ is clearly the smallest symmetric set containing $\mathcal{Q}$ and therefore the smallest inducing a reversible chain.  Transposing a joint distribution matrix swaps the left and right marginals, both contained in $\Pi$. Thus, adding transposed matrices does not increase the set of marginals.
\end{proof}

Next, we show that every symmetric set of joint distributions can be extended to a convex set, preserving symmetry. Convexity is desirable, as it ensures that the set of possible models is closed under convex combinations, which is useful for computational purposes.

\begin{proposition}
	Let $\mathcal{Q}$ be a symmetric set of joint distributions with the stationary set of distributions $\Pi$. Then $\co(\mathcal{Q})$ is a convex and symmetric set of joint distributions with the stationary set $\co(\Pi)$.
\end{proposition}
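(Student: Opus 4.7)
The plan is to prove each of the three claimed properties in turn: convexity, symmetry, and the identification of the marginal set. The unifying observation is that both operations involved---transposition $Q\mapsto Q^{\top}$ and the marginal map $Q\mapsto Q\mathbf{1}^{\top}$---are \emph{linear}, so each commutes with the formation of convex combinations and hence with the convex-hull operator.

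First, convexity of $\co(\mathcal{Q})$ is immediate from the definition. For symmetry, I would write a generic element $Q\in\co(\mathcal{Q})$ as $Q=\sum_i \lambda_i Q_i$ with $Q_i\in\mathcal{Q}$, $\lambda_i\ge 0$, $\sum_i\lambda_i=1$, and use linearity of transposition to obtain $Q^{\top}=\sum_i\lambda_i Q_i^{\top}$. Since $\mathcal{Q}$ is symmetric, each $Q_i^{\top}$ lies in $\mathcal{Q}$, so $Q^{\top}\in\co(\mathcal{Q})$. This gives $\co(\mathcal{Q})^{\top}\subseteq\co(\mathcal{Q})$, and the reverse inclusion follows by applying the same argument to $\co(\mathcal{Q})^{\top}$ (or just by the involutive nature of transposition).

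For the marginal identification, I would apply the same commutation principle to the left-marginal map $\mu\colon Q\mapsto Q\mathbf{1}^{\top}$. For $Q=\sum_i\lambda_i Q_i$ as above,
\[
\mu(Q)=\sum_i\lambda_i\, Q_i\mathbf{1}^{\top}\in\co(\Pi),
\]
so $\csete_{\co(\mathcal{Q})}\subseteq\co(\Pi)$. Conversely, any convex combination $\sum_i\lambda_i q_i$ with $q_i\in\Pi$ can be written as $\mu\bigl(\sum_i\lambda_i Q_i\bigr)$ by choosing $Q_i\in\mathcal{Q}$ with $Q_i\mathbf{1}^{\top}=q_i$, and this element lies in $\mu(\co(\mathcal{Q}))=\csete_{\co(\mathcal{Q})}$. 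The same argument applied to the right-marginal map $Q\mapsto \mathbf{1}Q$ gives $\csete^{*}_{\co(\mathcal{Q})}=\co(\Pi^{*})$, and symmetry of $\mathcal{Q}$ (already established for the convex hull in the previous step) ensures $\Pi^{*}=\Pi$, so both marginal sets coincide with $\co(\Pi)$.

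There is no real obstacle here---each step is a one-line verification that a linear map commutes with convex hulls. If anything, the only point requiring a moment of care is noticing that symmetry of $\mathcal{Q}$ implies $\csete_{\mathcal{Q}}=\csete^{*}_{\mathcal{Q}}=\Pi$, which is needed so that the statement ``stationary set $\co(\Pi)$'' is unambiguous for $\co(\mathcal{Q})$; but this is already part of the hypothesis via the symmetry of $\mathcal{Q}$ combined with the definitions in~\eqref{eq:general-left-right}.
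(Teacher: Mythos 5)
Your proposal is correct and follows essentially the same route as the paper: symmetry of $\co(\mathcal{Q})$ via linearity of transposition applied to convex combinations, and the marginal identification via the fact that the (linear) marginal maps commute with taking convex hulls, together with $\csete_{\mathcal{Q}}=\csete^{*}_{\mathcal{Q}}=\Pi$ from symmetry. The only difference is that you spell out both inclusions for $\csete_{\co(\mathcal{Q})}=\co(\Pi)$, which the paper dispatches with a ``clearly''.
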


\begin{proof}
	To see that $\co(\mathcal{Q})$ is symmetric, take a convex combination $\alpha Q + \beta Q'$, where $0 \le \alpha, \beta \le 1$ and $\alpha + \beta = 1$. We have that $(\alpha Q + \beta Q')^\top = \alpha Q^\top + \beta Q'^\top$, which is a convex combination of the transposed matrices. By symmetry, $Q^\top$ and $Q'^\top$ are also in $\mathcal{Q}$, so $(\alpha Q + \beta Q')^\top \in \co(\mathcal{Q})$.

	Clearly, symmetry of $\mathcal Q$ implies that the corresponding unique stationary set $\Pi$ equals $\csete_{\mathcal Q} = \csete^*_{\mathcal Q}$, and clearly $\csete_{\co(\mathcal Q)} = \co(\csete_{\mathcal Q}) = \co(\Pi)$. 
\end{proof}
In the simplest form a convex set of joint distribution matrices is described by considering all joint distribution matrices that lie between a lower and upper bound $\low Q \le \up Q$. This model can be directly related to the \emph{probability interval} model \cite{augustin2014introduction, walley:91}, which is known to be non-empty whenever the sum of lower probabilities is less and the sum of upper probabilities more than 1. 

Note that the bounds themselves are not joint distribution matrices, however, the set of matrices 
\begin{equation}
	[\low Q, \up Q] := \{ Q\mid Q\text{ is a joint probability matrix }, \low Q\le Q\le \up Q \}
\end{equation}
is clearly a convex set of matrices, which we call an \emph{interval joint distribution matrix}. The following proposition is also obvious. 
\begin{proposition}
	Let $\low Q\le \up Q$ be bounds on the joint distribution matrices, such that $[\low Q, \up Q]$ is non-empty. Then if $\low Q = \low Q^\top$ and $\up Q = \up Q^\top$ both hold, then $[\low Q, \up Q]$ is symmetric. Moreover, if the bounds are exact---that is if they are reached by the elements of $[\low Q, \up Q]$---symmetry of $[\low Q, \up Q]$ implies symmetry of the bounds. 
\end{proposition}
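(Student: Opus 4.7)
The proposition has two independent directions, and both reduce to the fact that transposition preserves all the entrywise information that defines the interval set. The plan is to treat each direction separately.

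For the forward implication, I would take an arbitrary $Q \in [\low Q, \up Q]$ and argue that $Q^{\top}$ lies in the same set. Two things must be checked: first, that $Q^{\top}$ is still a joint probability matrix (which is immediate, since transposition preserves non-negativity and the total sum of entries, and the class of joint distribution matrices on $\states \times \states$ is exactly the non-negative matrices with entries summing to $1$); second, that the entrywise bounds survive transposition. For the latter, the inequality $\low Q \le Q \le \up Q$ read entrywise gives $\low Q(x,y) \le Q(x,y) \le \up Q(x,y)$ for every pair $(x,y)$. Swapping the roles of $x$ and $y$ and using $\low Q = \low Q^{\top}$ and $\up Q = \up Q^{\top}$ turns this into $\low Q(y,x) \le Q(x,y) \le \up Q(y,x)$, i.e.\ $\low Q \le Q^{\top} \le \up Q$, so $Q^{\top} \in [\low Q, \up Q]$.

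For the converse, I would exploit exactness entry by entry. Fix $(x,y)$. By assumption there exists $Q^{xy} \in [\low Q, \up Q]$ with $Q^{xy}(x,y) = \low Q(x,y)$. By the assumed symmetry of the interval set, $(Q^{xy})^{\top} \in [\low Q, \up Q]$, hence
\[
\low Q(y,x) \;\le\; (Q^{xy})^{\top}(y,x) \;=\; Q^{xy}(x,y) \;=\; \low Q(x,y).
\]
Exchanging $x$ and $y$ in this argument yields the reverse inequality $\low Q(x,y) \le \low Q(y,x)$, so $\low Q = \low Q^{\top}$. The same reasoning, applied to an element attaining $\up Q(x,y)$, gives $\up Q = \up Q^{\top}$.

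I do not anticipate any real obstacle: the entire argument is an entrywise bookkeeping exercise, and the only subtle point is remembering that exactness is needed in the converse direction, since without it the componentwise bounds could be strictly tighter than what the set forces and therefore need not inherit any symmetry from $[\low Q, \up Q]$ itself. I would include a short remark making that dependence explicit.
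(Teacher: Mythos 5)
Your proof is correct: the forward direction via entrywise transposition of the bounds and the converse via per-entry exactness (picking, for each pair $(x,y)$, an element attaining the bound there and transposing it) are exactly the natural argument. The paper itself states this proposition without proof, calling it obvious, so your write-up simply supplies the intended entrywise bookkeeping, including the correct observation that exactness is genuinely needed for the converse.
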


\begin{example}
	Consider transition matrices from Example~\ref{ex-non-reversible}: 
	\[
	P_{1}=\begin{pmatrix}0.2&0.8\\0.7&0.3\end{pmatrix},
	\qquad
	P_{2}=\begin{pmatrix}0.6&0.4\\0.5&0.5\end{pmatrix},
	\qquad
	\tmatrices=\{P_{1},P_{2}\}.
	\]
	We construct the minimal reversible interval joint distribution matrix that contains the two reversible precise processes associated with $P_1$ and $P_2$. Using the corresponding stationary distributions 
	\[
	\pi_{1}=(7/15,\,8/15)\approx(0.4667,0.5333),
	\qquad
	\pi_{2}=(5/9,\,4/9)\approx(0.5556,0.4444).
	\]
	we obtain the joint matrices 
	\[
	Q_{1}=\diag(\pi_1) P_1 = \left(
	\begin{array}{cc}
		0.0933 & 0.3733 \\
		0.3733 & 0.16 \\
	\end{array}
	\right),
	\qquad
	Q_{2}=\diag(\pi_2) P_2 =\left(
	\begin{array}{cc}
		0.3333 & 0.2222 \\
		0.2222 & 0.2222 \\
	\end{array}
	\right).
	\]	
	The minimal interval joint distribution matrix $[\low Q, \up Q]$ containing both $Q_1$ and $Q_2$ is obtained by taking 
	\[
	\low Q = \left(
	\begin{array}{cc}
		0.0933 & 0.2222 \\
		0.2222 & 0.16 \\
	\end{array}
	\right),
	\qquad
	\up Q =\left(
	\begin{array}{cc}
		0.3333 & 0.3733 \\
		0.3733 & 0.2222 \\
	\end{array}
	\right),
	\]	
	which are clearly symmetric, and therefore induces a reversible imprecise Markov chain with symmetric interval joint distribution matrix $[\low Q, \up Q]$, and the reversible precise chains corresponding to $P_1$ and $P_2$ are among its compatible processes.  
\end{example}

\section{Application to random walks on weighted graphs}\label{s-rwwg}

Random walks on weighted graphs give a canonical, widely studied model
for reversible Markov chains.  The graph’s vertices form the state
space, and the transition probability from one vertex to another is
proportional to the weight of the connecting edge
\cite{aldous:fill:14,Feige1995,Gobel1974,Lovasz1993}.  Beyond sampling,
they underpin applications in network analysis
\cite{Coppersmith1993,lin2014mean,liu2012probabilistic,zhang2014effects},
social networks \cite{Backstrom2011,li2011link,yin2010unified}, and
recommender systems \cite{fouss2007random}.

Owing to this intuitive graph-based interpretation, random walks provide
a natural arena in which to apply the abstract reversibility results of
Section~\ref{s-rimc}.  On a \emph{precisely} weighted, undirected graph
the induced walk is automatically reversible: undirected edges ensure
that every path can be traversed in either direction with the same edge
weight, yielding symmetric transition probabilities.  Our aim here is to
show how an analogous symmetry arises when the edge weights are
specified only \emph{imprecisely}.  In this broader setting one must
decide whether to retain the strict undirected property or adopt a more
flexible requirement.  The dilemma mirrors an earlier choice in
the theory of imprecise Markov chains—whether to insist on time-homogeneity or
allow more general dynamics.  As there, demanding full symmetry of every
underlying precise model leads to an overly rigid, often unrealistic
description, whereas a more general formulation captures uncertainty
more faithfully.

\subsection{Precise random walk}

Let $\states$ be the vertex set of a connected \emph{undirected weighted
	graph} $\mathcal G=(\states,\mathcal E)$ with edge weights
$w(x,y)=w(y,x)\ge 0$, where $w(x,y)=0$ denotes absence of an edge.
Connectivity means that for every $u,v\in\states$ there exists a path
$u=x_0,x_1,\dots,x_n=v$ with $w(x_i,x_{i+1})>0$.  A weight function
$w\colon\mathcal E\to\RR_+$ induces the transition matrix
\begin{equation}\label{eq:rw-transition}
	P_w(x,y)=\frac{w(x,y)}{w(x)},
	\qquad
	w(x):=\sum_{z\in\states}w(x,z).
\end{equation}
The resulting \emph{random walk} $(X_n)_{n\ge 0}$ is time homogeneous.
It is ergodic when the greatest common divisor of the lengths of all
\emph{closed walks} through every vertex equals~1.  Its unique stationary
distribution is
\begin{equation}\label{eq:rw-stationary}
	\pi_w(x)=\frac{w(x)}{W},
	\qquad
	W:=\sum_{(x, y)\in\mathcal E}w(x, y).
\end{equation}
Equations~\eqref{eq:rw-transition} and \eqref{eq:rw-stationary} satisfy
the detailed-balance condition $\pi(x)P(x,y)=\pi(y)P(y,x)$, so the walk
is \emph{reversible} and all general results from Section~\ref{s-rimc}
apply.

\subsection{Interval-weight model}
Analogous to modelling imprecision in probability distributions—where
precise probabilities \(P(A)\) of certain events are replaced by
intervals \([\underline P(A),\overline P(A)]\)—the weights of a
graph’s edges can be made imprecise by introducing intervals
\([\underline w(x,y),\overline w(x,y)]\).
More generally, a precise weight function
\(w\colon\mathcal E\to\RR_+\) is replaced by two bounding functions
\(\underline w\) and \(\overline w\) with
\(\underline w\le\overline w\).
Because weights require no normalisation, such a pair induces the
\emph{convex} and non-empty set
\[
\mathcal W=\{\,w\mid \underline w\le w\le\overline w\},
\]
which contains both \(\underline w\) and \(\overline w\).

The corresponding set of transition matrices is
\[
\tmatrices_{\mathcal W}=\{P_w\mid w\in\mathcal W\}.
\]
This set, however, does \emph{not} inherit convexity from
\(\mathcal W\), as the following example shows.

\begin{example}
	Consider the weight functions
	\[
	W_1=\begin{pmatrix}1&3\\3&2\end{pmatrix},
	\qquad
	W_2=\begin{pmatrix}1&5\\5&2\end{pmatrix}.
	\]
	The induced transition matrices are
	\[
	P_1=\begin{pmatrix}\frac14&\frac34\\[2pt]\frac35&\frac25\end{pmatrix},
	\qquad
	P_2=\begin{pmatrix}\frac16&\frac56\\[2pt]\frac57&\frac27\end{pmatrix}.
	\]
	The convex combination
	\(W=\tfrac12W_1+\tfrac12W_2\) yields
	\[
	P=\begin{pmatrix}\frac15&\frac45\\[2pt]\frac46&\frac26\end{pmatrix},
	\]
	which is \emph{not} a convex combination of \(P_1\) and \(P_2\).
	Hence the set of transition matrices induced by an interval set of
	weights is, in general, non-convex.
\end{example}

Convexity is essential for efficient estimation of expectations via
linear programming.
To obtain a convex model, \v Skulj \cite{skulj:16IJAR} fixed the marginals
(the sums of edge weights incident to each vertex), resulting in convex
transition sets that, however, lack \emph{separately specified rows}.
Because the rows \(P(\cdot\mid x)\) cannot then be chosen
independently, the marginal sets at later times become non-convex,
precluding standard linear programming methods.

We therefore propose a different model that avoids fixing marginals and
instead represents the imprecise random walk through sets of joint
distribution matrices.
Recall that a precise random walk on a weighted graph is assumed to
start in its stationary distribution.

\begin{proposition}
	Let \(w\) be a weight function on a graph with vertex set
	\(\states\).
	The joint distribution of a two-step random walk starting from its
	stationary distribution is
	\[
	Q_w(x,y)=\frac{w(x,y)}{W},
	\]
	where \(W\) is the total weight \(W=\sum_{(x, y)\in \mathcal E}w(x, y)\) (each edge between different nodes counted
	twice).
\end{proposition}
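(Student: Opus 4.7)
The plan is to unpack the definition of a two-step joint-distribution matrix from Section~\ref{s-jdmmc} and substitute the explicit expressions for the stationary distribution and one-step transition probabilities of the random walk. Concretely, I would start by writing $Q_w(x,y) = Q_{\pi_w,P_w}(x,y) = \pi_w(x)\,P_w(x,y)$, using $\pi_w$ as the initial distribution since the walk is assumed to start stationary. This is immediate from the definition of a joint-distribution matrix given at the beginning of Section~\ref{s-jdmmc}.

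Next, I would insert \eqref{eq:rw-stationary} for $\pi_w(x) = w(x)/W$ and \eqref{eq:rw-transition} for $P_w(x,y) = w(x,y)/w(x)$. The factor $w(x)$ in the numerator of $\pi_w$ cancels the denominator of $P_w$, and what remains is $w(x,y)/W$, which is the asserted formula. One minor subtlety is that $P_w$ is only defined for vertices $x$ with $w(x) > 0$; since the graph is connected and has at least two vertices, $w(x) > 0$ everywhere, so this poses no issue. For isolated rows one could invoke the convention $0/0 := 0$ introduced in Proposition~\ref{prop:p_Gamma}.

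Finally, I would justify the parenthetical remark that $W$ counts each edge twice. Expanding $W = \sum_z w(z) = \sum_{z,y} w(z,y)$ and invoking the symmetry $w(x,y)=w(y,x)$ of the undirected weight function yields $W = \sum_{\{x,y\}, x\neq y} 2\,w(x,y) + \sum_x w(x,x)$; loops are counted once but each proper edge is indeed counted twice. This is bookkeeping rather than substance.

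There is no real obstacle here: the statement is essentially a one-line computation that unwraps two definitions. The only thing worth emphasizing in the write-up, beyond the algebraic cancellation, is that this identification of $Q_w$ with the symmetric matrix $w/W$ is what makes the reversibility of the precise random walk transparent in the joint-distribution representation, setting the stage for the imprecise extension in the following subsection.
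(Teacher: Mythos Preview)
Your proposal is correct and follows exactly the paper's approach: write $Q_w(x,y)=\pi_w(x)P_w(x,y)$, substitute \eqref{eq:rw-stationary} and \eqref{eq:rw-transition}, and cancel $w(x)$. The paper's proof is literally this one-line computation; your additional remarks on positivity of $w(x)$ and the edge-counting convention are fine elaborations but not present in the original.
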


\begin{proof}
	Since \(\pi_w(x)=w(x)/W\) and
	\(P_w(x,y)=w(x,y)/w(x)\), we have
	\(Q_w(x,y)=\pi_w(x)P_w(x,y)=w(x,y)/W\).
\end{proof}

Thus the set \(\mathcal W\) induces the set
\[
\mathcal Q_{\mathcal W}
=\bigl\{\,Q_w\mid \;Q_w(x,y)=w(x,y)/W,\;w\in\mathcal W\bigr\},
\]
with \(W=\sum_{(x, y)\in \mathcal E}w(x, y)\).
Unlike the transition set, \(\mathcal Q_{\mathcal W}\) is convex:

\begin{proposition}\label{prop-weights-convex}
	If \(\mathcal W\) is convex, then the corresponding set
	\(\mathcal Q_{\mathcal W}\) is convex.
\end{proposition}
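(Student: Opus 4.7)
The plan is to show that every convex combination $\alpha Q_{w_1} + (1-\alpha) Q_{w_2}$ with $w_1, w_2 \in \mathcal W$ and $\alpha \in [0,1]$ coincides with $Q_w$ for some $w \in \mathcal W$. Writing $W_i := W(w_i) = \sum_z w_i(z) > 0$ and using $Q_{w_i}(x,y) = w_i(x,y)/W_i$, the target matrix takes the form
\[
\alpha Q_{w_1} + (1-\alpha) Q_{w_2} \;=\; \frac{\alpha}{W_1}\, w_1 + \frac{1-\alpha}{W_2}\, w_2,
\]
a nonnegative combination of $w_1$ and $w_2$ whose coefficients generally do \emph{not} sum to one, so this combination itself need not lie in $\mathcal W$.

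The key observation is that the map $w \mapsto Q_w = w/W(w)$ is invariant under positive rescaling, so it suffices to locate any $w \in \mathcal W$ lying on the same ray through the origin as the target. I would therefore search for a convex combination $w := \lambda w_1 + (1-\lambda) w_2$ whose coefficient ratio $\lambda/(1-\lambda)$ matches the target ratio $(\alpha W_2)/((1-\alpha) W_1)$. Solving gives the explicit reweighting
\[
\lambda \;=\; \frac{\alpha W_2}{\alpha W_2 + (1-\alpha) W_1} \;\in\; [0,1];
\]
convexity of $\mathcal W$ places $w$ in $\mathcal W$, and a direct computation yields $W(w) = W_1 W_2 / (\alpha W_2 + (1-\alpha) W_1)$, from which $Q_w = \alpha Q_{w_1} + (1-\alpha) Q_{w_2}$ follows by substitution.

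The only subtle point---and precisely what explains why the analogous claim fails for the transition matrices $P_w$---is the need to reweight the convex parameter: the naive choice $\lambda = \alpha$ breaks down whenever $W_1 \ne W_2$ because of the differing normalising denominators. Geometrically, $\mathcal Q_{\mathcal W}$ is the intersection of the convex cone $\{cw : c \ge 0,\ w \in \mathcal W\}$ with the affine hyperplane $\{v : W(v) = 1\}$, and such an intersection is always convex; the explicit $\lambda$ above merely makes this geometric fact constructive.
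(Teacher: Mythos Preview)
Your proof is correct and rests on the same idea as the paper's: the convex parameter must be reweighted by the total weights $W_i$ to pass between a convex combination in $\mathcal W$ and one in $\mathcal Q_{\mathcal W}$. The difference is only the direction of the computation. The paper starts from a convex combination $\alpha w + (1-\alpha) w'$ in $\mathcal W$ and shows that $Q_{\alpha w + (1-\alpha) w'} = A\,Q_w + B\,Q_{w'}$ with $A=\alpha W/(\alpha W + (1-\alpha)W')$; to conclude convexity of $\mathcal Q_{\mathcal W}$ one then needs the (implicit) observation that $\alpha\mapsto A$ is onto $[0,1]$. You instead start from a target combination $\alpha Q_{w_1}+(1-\alpha)Q_{w_2}$ and solve explicitly for the preimage parameter $\lambda$, which gives the required element of $\mathcal Q_{\mathcal W}$ directly---this is the inverse of the paper's reparametrisation and is marginally cleaner for the stated purpose. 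Your closing remark that $\mathcal Q_{\mathcal W}$ is the intersection of the cone over $\mathcal W$ with the affine hyperplane $\{W(\cdot)=1\}$ is a helpful geometric summary that the paper does not make explicit.
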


\begin{proof}
	Let \(w,w'\in\mathcal W\) and \(\alpha,\beta\ge0\) with
	\(\alpha+\beta=1\).
	Writing \(W=\sum_{(x, y)\in\mathcal E}w(x, y)\) and \(W'=\sum_{(x, y)\in\mathcal E}w'(x, y)\), we obtain for all
	\(x,y\in\states\)
	\[
	Q_{\alpha w+\beta w'}(x,y)
	=\frac{\alpha w(x,y)+\beta w'(x,y)}{\alpha W+\beta W'}
	=A Q_w(x,y)+B Q_{w'}(x,y),
	\]
	where \(A=\alpha W/(\alpha W+\beta W')\) and
	\(B=\beta W'/(\alpha W+\beta W')\).
	Because \(A,B\ge0\) and \(A+B=1\), \(Q_{\alpha w+\beta w'}\) is a
	convex combination of \(Q_w\) and \(Q_{w'}\).
	
	Conversely, given any $A, B \ge 0$ with $A + B = 1$, define
	\[
	\alpha = \frac{A/W}{A/W + B/W'}, 
	\quad 
	\beta = \frac{B/W'}{A/W + B/W'}.
	\]
	Then $\alpha, \beta \ge 0$, $\alpha + \beta = 1$, and substituting these into the expressions above yields the same $A$ and $B$. 
	Hence, there is a one-to-one correspondence between $(\alpha, \beta)$ and $(A, B)$.
	
	Since $\alpha w + \beta w' \in \mathcal{W}$ by convexity of $\mathcal{W}$, it follows that $Q_{\alpha w + \beta w'} \in \mathcal{Q}_{\mathcal{W}}$. 
	Therefore, every convex combination $A Q_w + B Q_{w'}$ of elements of $\mathcal{Q}_{\mathcal{W}}$ again lies in $\mathcal{Q}_{\mathcal{W}}$, proving that $\mathcal{Q}_{\mathcal{W}}$ is convex.
\end{proof}

As explained earlier, processes determined by a set of joint distribution
matrices arise from marginally compatible sequences
\(Q_{1:N-1}\subset\mathcal Q_{\mathcal W}\).
For an \emph{undirected} graph the left and right marginals of each
\(Q\) coincide, so admissible sequences would consist only of matrices
with identical marginals, which is a severe restriction.
To obtain a richer model we therefore relax symmetry of the weight
functions and consider directed graphs.

\subsection{Random walks on directed weighted graphs with interval weights}

We now consider convex sets $\mathcal{W}$ of not necessarily symmetric
weight functions defined on a complete graph with vertex set
$\states$.  For each $w\in\mathcal{W}$, let $W=\sum_{(x, y)\in \mathcal E}w(x, y)$ denote the total weight (the sum of all directed edges). Because every edge contributes exactly once, $W$ equals both the total outgoing and incoming weight.  For each such
$w$ we define the joint distribution matrix $Q_w = w/W$.  The left
marginal distribution is $\tfrac{w_O}{W}$, where
$w_O(x)=\sum_{y\in\states} w(x,y)$ is the total outgoing weight from
vertex $x$, while the right marginal is $\tfrac{w_I}{W}$, with
$w_I(y)=\sum_{x\in\states} w(x,y)$ the total incoming weight to vertex
$y$.  A random walk on the directed graph is the process
$(X_{n})_{n\in \ZZ}$ that moves from $x$ to $y$ with probability
$w(x,y)/w_O(x)$.  Let
$\mathcal{Q}_{\mathcal{W}}=\{\,Q_w\mid  w\in\mathcal{W}\,\}$ denote the
set of all joint matrices induced by $\mathcal{W}$. The left and right marginals are then 
\begin{align*}
	\mathcal M_{\mathcal Q_{\mathcal{W}}} = \left\{ \frac{w_O}{W} \mid  w\in\mathcal{W} \right\}, \qquad \mathcal M^*_{\mathcal Q_{\mathcal{W}}} = \left\{ \frac{w_I}{W} \mid  w\in\mathcal{W} \right\}.
\end{align*} 
Let $w \in \mathcal W$ be a weight function and $w^\top$ its transpose: $w^\top(x, y) = w(y, x)$. Then clearly, $w^\top_I = w_O$ and $w^\top_O = w_I$. Let $\mathcal W^\top = \{w^\top\mid w\in \mathcal W \}$ denote the transposed set of weights; we then have  
\begin{equation*}
	\mathcal M_{\mathcal Q_{\mathcal{W^\top}}} = \mathcal M^*_{\mathcal Q_{\mathcal{W}}} \qquad \mathcal M^*_{\mathcal Q_{\mathcal{W^\top}}} = \mathcal M_{\mathcal Q_{\mathcal{W}}}.
\end{equation*}
We say that $\mathcal W$ is \emph{symmetric} if $\mathcal W^\top = \mathcal W$. It is now clear that a symmetric weight function induces identical left and right set of marginals  $\mathcal M_{\mathcal Q_{\mathcal{W}}} = \mathcal M^*_{\mathcal Q_{\mathcal{W}}}$. 

The convexity argument for symmetric weights extends verbatim to the directed case, since it relies only on normalization and not on symmetry. 
\begin{proposition}\label{prop-weights-non-symmetric-convex}
	If \(\mathcal W\) is a convex set of not necessarily symmetric weight functions, then the corresponding set
	\(\mathcal Q_{\mathcal W}\) is convex.
\end{proposition}

\begin{definition}
	Let $\mathcal{W}$ be a set of weight functions on a complete graph
	with vertex set $\states$, and let
	$\mathcal{Q}_{\mathcal{W}}$ be the corresponding set of joint
	distribution matrices.  A \emph{random walk} on the weighted graph
	with weight set $\mathcal{W}$ is the imprecise Markov chain whose finite-horizon
	transition laws are
	\[
	\mathcal{G}_{\mathcal{W}}
	=\{\,\Gamma_{Q_{1:N-1}} \mid  Q_i\in\mathcal{Q}_{\mathcal{W}}\ \text{for all }i,
	\text{ and the } Q_i \text{ are pairwise marginally compatible}\}.
	\]
\end{definition}

The next result links symmetry of the weight set to symmetry of the
corresponding joint matrices.

\begin{proposition}
	Let $\mathcal{W}$ be a convex and symmetric set of weight functions.
	Then the corresponding set of joint matrices $\mathcal{Q}_{\mathcal{W}}$
	is symmetric, and the random walk generated from it is reversible.
\end{proposition}

\begin{proof}
	For every $w\in\mathcal{W}$ we have $Q_w^{\top}=Q_{w^\top}$. 
	Since $\mathcal{W}$ is symmetric, $w^\top\in\mathcal{W}$ and therefore 
	$Q_w^{\top}\in\mathcal{Q}_{\mathcal{W}}$, implying that 
	$\mathcal{Q}_{\mathcal{W}}$ is closed under transposition and thus symmetric. Since the induced sets of left and right marginals respectively are identical, Theorem~\ref{thm-reversibility-symmetry} implies that	the random walk induced by~$\mathcal{W}$ is reversible.
\end{proof}

A symmetric set $\mathcal{W}$ may contain individual weight functions
that are \emph{not} symmetric—edge weights can depend on travel
direction—but the set as a whole is symmetric.  This mirrors the case
of an imprecise time-homogeneous Markov chain, which contains
non-homogeneous precise processes yet remains homogeneous when viewed
as an imprecise model.

\begin{remark}
	Convexity of the weight set $\mathcal{W}$ implies convexity of the	corresponding set of joint matrices
	$\mathcal{Q}_{\mathcal{W}}$.	However, an interval specification	$\mathcal{W}=[\underline W,\overline W]$
	does not in general yield a componentwise interval set	$\mathcal{Q}_{\mathcal{W}}=[\underline Q,\overline Q]$.	
	For each component $(x,y)$, the attainable lower and upper bounds of
	$Q_w(x,y)$ over the interval set of weights $\mathcal{W}$ are obtained at opposite
	corners of $\mathcal{W}$:
	\[
	\underline Q(x, y)
	=\frac{\underline W(x, y)}{
		\underline W(x, y)
		+ \sum_{(u,v)\neq(x,y)} \overline W(u, v)},
	\qquad
	\overline Q(x, y)
	=\frac{\overline W_{xy}}{
		\overline W(x, y)
		+ \sum_{(u,v)\neq(x,y)} \underline W(u, v)}.
	\]
	These bounds provide the tightest componentwise enclosure, but
	not every matrix $Q$ satisfying
	$\underline Q\le Q\le \overline Q$ can be realised by a weight function
	$w\in[\underline W,\overline W]$. 	Hence, $\mathcal{Q}_{\mathcal{W}}$ is a convex polytope that is
	generally a strict subset of the interval joint distribution matrix 
	$[\underline Q,\overline Q]$.
\end{remark}

\section{Numerical calculations}\label{s-nc}
Because reversibility implies stationarity, numerical analysis shifts
from estimating marginal distributions of the variables $X_k$
to estimating their joint distributions.
These joint distributions fully determine all probabilistic aspects
of a reversible imprecise Markov chain.

As is standard in imprecise probability models,
imprecision in a joint distribution is expressed
through the range of expected values of a functional $f(X_{1:N})$.
In particular, we compute the \emph{lower} and \emph{upper expectations}
\[
\low E(f)
= \min_{p\in \csete} \sum_{x_{1:N}} p(x_{1:N}) f(x_{1:N}),
\qquad
\up E(f)
= \max_{p\in \csete} \sum_{x_{1:N}} p(x_{1:N}) f(x_{1:N}),
\]
where $\csete$ denotes the set of all joint distributions
$p_\Gamma$ generated by transition laws
$\Gamma = \Gamma_{Q_{1:N-1}}$ with $Q_k \in \mathcal{Q}_k$ for all $k$.

\subsection{Multilinear optimization formulation}
For notational simplicity we introduce elementwise lower and upper bounds
\[
\underline Q,\, \overline Q \in \mathbb{R}_{\ge 0}^{s\times s},
\qquad
\underline Q \le \overline Q,
\]
which define the interval-valued credal set
\[
\mathcal Q
= \{\, Q \mid \underline Q \le Q \le \overline Q \,\}.
\]
More general credal sets can be incorporated by replacing these interval constraints
with linear inequalities of the corresponding form, but same computational principle applies.

For a real-valued functional $f\colon \mathcal{X}^N \to \mathbb{R}$,
the lower and upper expectations can be obtained by solving
the following \emph{multilinear program}:
\[
\begin{array}{ll}
	\text{maximise/minimise} &
	\displaystyle
	E(f)
	= \sum_{x_{1:N}} f(x_{1:N})
	\frac{\prod_{k=1}^{N-1} Q_k(x_k,x_{k+1})}
	{\prod_{k=2}^{N-1} q^{(k)}(x_k)} \\[3ex]
	\text{subject to} &
	\underline Q \le Q_k \le \overline Q,
	\quad k=1,\dots,N-1, \\[1ex]
	&
	\sum_{x_{k-1}} Q_{k-1}(x_{k-1},x_k)
	= \sum_{x_{k+1}} Q_k(x_k,x_{k+1}),
	\quad k=2,\dots,N-1, \\[1ex]
	&
	Q_k(x_k,x_{k+1}) \ge 0,
	\quad
	\sum_{x_k,x_{k+1}} Q_k(x_k,x_{k+1}) = 1.
\end{array}
\]
(The non-negativity requirement could be omitted in this interval case, as it is imposed by $\underline Q\ge 0$, however it is needed in the case of more general constraints defining $\mathcal Q$.)
The objective function is generally nonlinear, but becomes \emph{linear}
in any individual matrix $Q_k$ when all others are fixed,
subject to the compatibility constraints.
In this case, all marginals $q^{(k)}$ are fixed,
and we can define
\[
\varphi(x_k, x_{k+1})
= \sum_{x_{1:k-1}} \sum_{x_{k+2:N}}
\frac{\displaystyle
	\prod_{l=1}^{k-1} Q_l(x_l,x_{l+1})
	\prod_{m=k+1}^{N-1} Q_m(x_m,x_{m+1})
}{
	\displaystyle
	\prod_{r=2}^{N-1} q^{(r)}(x_r)
}\,
f(x_{1:N}),
\]
which represents the aggregated contribution of all fixed transitions
to the expectation of $f$.
Then the optimization with respect to $Q_k$ reduces to
\[
\begin{array}{ll}
	\text{maximise/minimise} &
	\displaystyle
	E^k_Q(\varphi)
	= \sum_{x_k, x_{k+1}} Q(x_k, x_{k+1})\, \varphi(x_k, x_{k+1}) \\[3ex]
	\text{subject to} &
	\underline Q \le Q \le \overline Q, \\[1ex]
	&
	\sum_{x_{k+1}} Q(x_{k},x_{k+1}) = q^{(k)}(x_{k}), \\[1ex]
	&
	\sum_{x_{k}} Q(x_k,x_{k+1}) = q^{(k+1)}(x_{k+1}), \\[1ex]
	&
	Q(x_k,x_{k+1}) \ge 0,
	\quad
	\sum_{x_k,x_{k+1}} Q(x_k,x_{k+1}) = 1.
\end{array}
\]
This is a standard linear program in the variables $Q(x_k,x_{k+1})$,
since both the objective and the constraints are linear in $Q$. (Note that one of the marginal constraints is omitted in the cases $k = 1, k = N-1$.)

This implies that, at any global extremal solution $(Q_1^*,\dots, Q_{N-1}^*)$ corresponding to a functional $f$, each $Q_k^*$ is an optimal vertex of its feasible slice, which is a linear program. Hence, the global extreme can always be realised at an extreme point of the entire feasible polytope, even though optimizing each $Q_k$ separately would generally yield only a local extreme.

The total number of variables grows as $O((N-1)|\mathcal{X}|^2)$, since each matrix $Q_k$ contains $|\mathcal{X}|^2$ entries, while the number of linear compatibility constraints increases as
$O((N-2)|\mathcal{X}|)$. Hence, the overall problem scales linearly with the time horizon $N$
but quadratically with the number of states.

In principle, such problems can be approached through alternating linear programs or by evaluating the objective over vertex combinations, taking advantage of the fact that global optima occur at extreme configurations of the admissible joint matrices. However, these methods become computationally demanding as the number of states or time steps increases, since the number of extreme points grows exponentially. Consequently, direct enumeration or full alternating optimization is feasible only for small-scale examples such as the one considered below, while larger systems require additional structural simplifications or approximate solution methods. More generally, nonlinear programs of this type can be addressed with standard global optimization techniques—such as branch-and-bound, convex relaxations, or stochastic search—though these are typically computationally expensive for high-dimensional multilinear objectives (see, e.g., \cite{Floudas2000,HorstTuy1996}).

\subsection{Numerical example}

To illustrate the multilinear formulation, consider a two-state
reversible imprecise Markov chain over three time steps
$(X_1, X_2, X_3)$.
Each local joint distribution matrix is parametrised as
\[
Q_k =
\begin{pmatrix}
	0.1+\alpha_k & 0.2+\beta_k \\
	0.2+\beta_k & 0.5 - \alpha_k - 2\beta_k
\end{pmatrix},
\qquad
\alpha_k, \beta_k \in [0,0.1], \quad k=1,2.
\]
Each matrix $Q_k$ represents a convex set of admissible joint
distributions, whose extreme points are obtained by assigning the
boundary values $0$ or $0.1$ to $\alpha_k$ and $\beta_k$.
This yields four extreme points for each $Q_k$, and therefore
$4 \times 4 = 16$ combinations in the product space of $(Q_1,Q_2)$.
However, not all pairs are feasible: marginal compatibility requires
\[
\sum_{x_1} Q_1(x_1, y)
= \sum_{x_3} Q_2(y, x_3)
\quad\Longleftrightarrow\quad
\alpha_1+\beta_1 = \alpha_2+\beta_2,
\]
which restricts the number of admissible vertex combinations.

We consider the event
$f(x_1,x_2,x_3)=\mathbf{1}\{x_1=x_3\}$,
representing the probability of returning to the same state after two
steps. For fixed joint matrices, this probability equals
\[
P(X_1=X_3)
= \sum_{x_2}
\frac{
	\sum_{x_1=x_3} Q_1(x_1,x_2)\, Q_2(x_2,x_3)
}{q^{(2)}(x_2)},
\qquad
q^{(2)}(y) = \sum_x Q_1(x,y).
\]
Substituting the parametric form of $Q_k$ gives
\begin{align*}
	F(\alpha_1,\beta_1,\alpha_2,\beta_2)
	& = 
	\frac{(0.1+\alpha_1)(0.1+\alpha_2) + (0.2+\beta_1)(0.2+\beta_2)}
	{0.3+\alpha_1+\beta_1} \\[0.5ex]
	& \quad +
	\frac{(0.2+\beta_1)(0.2+\beta_2)
		+ (0.5-\alpha_1-2\beta_1)(0.5-\alpha_2-2\beta_2)}
	{0.7-\alpha_1-\beta_1}.
\end{align*}
Because the objective function is bilinear in
$(Q_1,Q_2)$, extrema are attained at the extreme points of the feasible
set. Hence, it suffices to evaluate $F$ at the vertex combinations
$\alpha_k,\beta_k \in \{0,0.1\}$ that also satisfy the compatibility
condition $\alpha_1+\beta_1=\alpha_2+\beta_2$.
The admissible combinations and corresponding probabilities
$P(X_1=X_3)=F(\alpha_1,\beta_1,\alpha_2,\beta_2)$
are listed in Table~\ref{tab:extreme_points}.

\begin{table}[h!]
	\centering
	\caption{Admissible extreme points and corresponding values of
		$F(\alpha_1,\beta_1,\alpha_2,\beta_2)$.}
	\label{tab:extreme_points}
	\begin{tabular}{ccccc}
		\toprule
		\# &
		$(\alpha_1,\beta_1)$ &
		$(\alpha_2,\beta_2)$ &
		$q^{(2)}$ &
		$P(X_1=X_3)$ \\
		\midrule
		1 & (0.0, 0.0) & (0.0, 0.0) & (0.3, 0.7) & $0.58095$ \\
		2 & (0.0, 0.1) & (0.0, 0.1) & (0.4, 0.6) & $0.55$ \\
		3 & (0.0, 0.1) & (0.1, 0.0) & (0.4, 0.6) & $0.50$ \\
		4 & (0.1, 0.0) & (0.0, 0.1) & (0.4, 0.6) & $0.50$ \\
		5 & (0.1, 0.0) & (0.1, 0.0) & (0.3, 0.7) & $0.53333$ \\
		6 & (0.1, 0.1) & (0.1, 0.1) & (0.4, 0.6) & $0.52$ \\
		\bottomrule
	\end{tabular}
\end{table}

The smallest and largest return probabilities are therefore
\[
\underline{P}(X_1 = X_3) = 0.50,
\qquad
\overline{P}(X_1 = X_3) = 0.58095.
\]
The upper bound corresponds to the symmetric stationary configuration
where 
\[Q_1=Q_2=
\begin{pmatrix}0.1&0.2\\0.2&0.5\end{pmatrix}, \]
while the lower bound arises from the most asymmetric pair of compatible
matrices. This confirms that the extreme values of the multilinear
program are attained at the extreme points of the joint feasible
polytope $(Q_1,Q_2)$.

\section{Conclusion and outlook}\label{s-co}
The main contribution of this paper is a systematic treatment of
time-reversal for imprecise Markov chains under the
\emph{strong-independence} interpretation.
We began by describing all \emph{compatible} precise processes, all of which are 
(inhomogeneous) Markov chains, and then reversed them individually; the
reversed imprecise process is obtained by collecting these reversed
precise chains.
Because a forward representation \((\mathcal Q,\mathcal P)\) is generally
unavailable for the reversed family, we adopted an inherently symmetric
description based on \emph{sets of joint distribution (edge-measure)
	matrices}.
Although the forward and joint distribution views are equivalent for
precise chains with strictly positive marginals, the joint distribution
approach is strictly more general in the imprecise settings and permits a direct time-reversal.
In this setting a reversible process, like its classical counterpart,
is fully characterised by two-step dependence; the difference is that
this dependence is now encoded by sets of joint distributions rather
than by marginal distributions coupled with conditional (transition)
matrices.
Note that sets of joint matrices can naturally be taken to be \emph{closed} and \emph{convex}, in which case they form credal sets on the product space $\states\times\states$ and can be specified by linear-programming constraints.

The motivation for introducing reversible imprecise Markov chains is primarily conceptual rather than algorithmic.
They extend the classical notion of reversibility to settings where transition probabilities are only partially specified, allowing robust analysis of symmetric stochastic systems under uncertainty.
Although current computational methods are limited to small systems, the joint distribution representation developed here establishes a foundation for future algorithmic advances and for studying robustness in domains where reversibility naturally occurs, such as network flows and random walks on graphs.

Three directions appear particularly promising for future work. First, extending the joint distribution framework to continuous-time settings would widen its applicability to birth–death processes and other models based on transition rates. Second, we need more efficient algorithms for evaluating lower and upper expectations of multiple-step path functionals, which are essential for inference with reversible imprecise chains. Finally, we aim to apply the theory to concrete domains—such as robust queueing, interval-valued epidemic models, and stochastic-network analysis—to demonstrate its practical value.

\section*{Acknowledgments}

\begin{enumerate}
	\item The author acknowledges the financial support of the Slovenian Research Agency
	(research core funding No.~P5-0168).
	\item The author thanks the two anonymous reviewers for their constructive and insightful comments, which helped improve the clarity and overall quality of the paper.
\end{enumerate}

\bibliographystyle{plain}

\begin{thebibliography}{10}
	
	\bibitem{aldous:fill:14}
	David Aldous and James~Allen Fill.
	\newblock Reversible {M}arkov chains and random walks on graphs, 2002.
	\newblock Unfinished monograph, recompiled 2014, available at
	\url{http://www.stat.berkeley.edu/~aldous/RWG/book.html}.
	
	\bibitem{Asmussen2003}
	Søren Asmussen.
	\newblock {\em Applied Probability and Queues}, volume~51 of {\em Applications
		of Mathematics}.
	\newblock Springer, New York, 2 edition, 2003.
	
	\bibitem{augustin2014introduction}
	Thomas Augustin, Frank~PA Coolen, Gert de~Cooman, and Matthias~CM Troffaes.
	\newblock {\em Introduction to imprecise probabilities}.
	\newblock John Wiley \& Sons, 2014.
	
	\bibitem{Backstrom2011}
	Lars Backstrom and Jure Leskovec.
	\newblock Supervised random walks: predicting and recommending links in social
	networks.
	\newblock In {\em Proceedings of the Fourth ACM International Conference on Web
		Search and Data Mining}, WSDM '11, page 635–644, New York, NY, USA, 2011.
	Association for Computing Machinery.
	
	\bibitem{krein:03}
	M.A. Campos, G.P. Dimuro, A.C. da~Rocha~Costa, and V.~Kreinovich.
	\newblock Computing 2-step predictions for interval-valued finite stationary
	{M}arkov chains.
	\newblock Technical report utep-cs-03-20a, University of Texas at El Paso,
	2003.
	
	\bibitem{Coppersmith1993}
	Don Coppersmith, Peter Doyle, Prabhakar Raghavan, and Marc Snir.
	\newblock Random walks on weighted graphs and applications to on-line
	algorithms.
	\newblock {\em J. ACM}, 40(3):421–453, July 1993.
	
	\bibitem{decooman-2008-a}
	Gert de~Cooman, Filip Hermans, and Erik Quaeghebeur.
	\newblock Imprecise {M}arkov chains and their limit behavior.
	\newblock {\em Probability in the Engineering and Informational Sciences},
	23(4):597--635, 2009.
	
	\bibitem{DoyleSnell1984}
	Peter~G. Doyle and J.~Laurie Snell.
	\newblock {\em Random Walks and Electric Networks}, volume~22 of {\em Carus
		Mathematical Monographs}.
	\newblock Mathematical Association of America, Washington, DC, 1984.
	\newblock Freely available online at
	\url{https://math.dartmouth.edu/\~doyle/docs/walks/walks.pdf}.
	
	\bibitem{erreygers2022markovian}
	Alexander Erreygers and Jasper De~Bock.
	\newblock Markovian imprecise jump processes: Extension to measurable
	variables, convergence theorems and algorithms.
	\newblock {\em International Journal of Approximate Reasoning}, 2022.
	
	\bibitem{Feige1995}
	Uriel Feige.
	\newblock A tight lower bound on the cover time for random walks on graphs.
	\newblock {\em Random Structures and Algorithms}, 6(4):433--438, 1995.
	
	\bibitem{Floudas2000}
	Christodoulos~A. Floudas.
	\newblock {\em Deterministic Global Optimization: Theory, Methods and
		Applications}.
	\newblock Springer, Boston, MA, 2000.
	
	\bibitem{fouss2007random}
	Francois Fouss, Alain Pirotte, Jean-Michel Renders, and Marco Saerens.
	\newblock Random-walk computation of similarities between nodes of a graph with
	application to collaborative recommendation.
	\newblock {\em Knowledge and data engineering, {IEEE} transactions on},
	19(3):355--369, 2007.
	
	\bibitem{Gobel1974}
	F.~Gobel and A.A. Jagers.
	\newblock Random walks on graphs.
	\newblock {\em Stochastic processes and their applications}, 2(4):311--336,
	1974.
	
	\bibitem{green1995reversible}
	Peter~J Green.
	\newblock Reversible jump {M}arkov chain {M}onte {C}arlo computation and
	bayesian model determination.
	\newblock {\em Biometrika}, 82(4):711--732, 1995.
	
	\bibitem{hart:98}
	Darald~J Hartfiel.
	\newblock {\em {M}arkov Set-Chains}.
	\newblock Springer-Verlag, Berlin, 1998.
	
	\bibitem{hastings1970monte}
	W~Keith Hastings.
	\newblock {M}onte {C}arlo sampling methods using {M}arkov chains and their
	applications.
	\newblock {\em Biometrika}, 57(1):97--109, 1970.
	
	\bibitem{HorstTuy1996}
	Reiner Horst and Hoang Tuy.
	\newblock {\em Global Optimization: Deterministic Approaches}.
	\newblock Springer, Berlin, 3rd edition, 1996.
	
	\bibitem{jerrum1996markov}
	Mark Jerrum and Alistair Sinclair.
	\newblock The {M}arkov chain {M}onte {C}arlo method: an approach to approximate
	counting and integration.
	\newblock {\em Approximation algorithms for NP-hard problems}, pages 482--520,
	1996.
	

	\bibitem{kelly2011reversibility}
	Frank~P Kelly.
	\newblock {\em Reversibility and stochastic networks}.
	\newblock Cambridge University Press, 2011.
	
	\bibitem{KemenySnell1976}
	John~G. Kemeny and J.~Laurie Snell.
	\newblock {\em Finite Markov Chains}.
	\newblock Springer, 1976.
	
	\bibitem{utkin:02}
	I.~Kozine and L.V. Utkin.
	\newblock Interval-valued finite {M}arkov chains.
	\newblock {\em Reliable Computing}, 8(2):97--113, 2002.
	
	\bibitem{krak2017imprecise}
	Thomas Krak, Jasper De~Bock, and Arno Siebes.
	\newblock Imprecise continuous-time {M}arkov chains.
	\newblock {\em International Journal of Approximate Reasoning}, 88:452--528,
	2017.
	
	\bibitem{krak2019hitting}
	Thomas Krak, Natan T’Joens, and Jasper De~Bock.
	\newblock Hitting times and probabilities for imprecise {M}arkov chains.
	\newblock In {\em International Symposium on Imprecise Probabilities: Theories
		and Applications}, pages 265--275. PMLR, 2019.
	
	\bibitem{LevinPeresWilmer2017}
	David~A. Levin, Yuval Peres, and Elizabeth~L. Wilmer.
	\newblock {\em Markov Chains and Mixing Times}.
	\newblock American Mathematical Society, 2 edition, 2017.
	
	\bibitem{li2011link}
	Rong-Hua Li, Jeffrey~Xu Yu, and Jianquan Liu.
	\newblock Link prediction: the power of maximal entropy random walk.
	\newblock In {\em Proceedings of the 20th ACM international conference on
		Information and knowledge management}, pages 1147--1156. ACM, 2011.
	
	\bibitem{lin2014mean}
	Yuan Lin and Zhongzhi Zhang.
	\newblock Mean first-passage time for maximal-entropy random walks in complex
	networks.
	\newblock {\em Scientific reports}, 4, 2014.
	
	\bibitem{liu2012probabilistic}
	Hongyu Liu and Evangelos Milios.
	\newblock Probabilistic models for focused web crawling.
	\newblock {\em Computational Intelligence}, 28(3):289--328, 2012.
	
	\bibitem{lopatatzidis2017computing}
	Stavros Lopatatzidis, Jasper De~Bock, and Gert De~Cooman.
	\newblock Computing lower and upper expected first-passage and return times in
	imprecise birth--death chains.
	\newblock {\em International Journal of Approximate Reasoning}, 80:137--173,
	2017.
	
	\bibitem{lopatatzidis2016robust}
	Stavros Lopatatzidis, Jasper De~Bock, Gert De~Cooman, Stijn De~Vuyst, and Joris
	Walraevens.
	\newblock Robust queueing theory: an initial study using imprecise
	probabilities.
	\newblock {\em Queueing Systems}, 82(1):75--101, 2016.
	
	\bibitem{Lovasz1993}
	L{\'a}szl{\'o} Lov{\'a}sz.
	\newblock Random walks on graphs.
	\newblock {\em Combinatorics, Paul erdos is eighty}, 2(1-46):4, 1993.
	
	\bibitem{miranda2008}
	Enrique Miranda.
	\newblock A survey of the theory of coherent lower previsions.
	\newblock {\em International Journal of Approximate Reasoning}, 48(2):628 --
	658, 2008.
	\newblock In Memory of Philippe Smets (1938-2005).
	
	\bibitem{nendel2021markov}
	Max Nendel.
	\newblock Markov chains under nonlinear expectation.
	\newblock {\em Mathematical Finance}, 31(1):474--507, 2021.
	
	\bibitem{norris:97}
	James~R. Norris.
	\newblock {\em {M}arkov Chains}.
	\newblock Cambridge University Press, Cambridge, 1997.
	
	\bibitem{skulj2025}
	Damjan {\v{S}}kulj.
	\newblock Reversing inhomogeneous and imprecise markov chains.
	\newblock In Jonathan Ansari, Sebastian Fuchs, Wolfgang Trutschnig,
	Mar{\'i}a~Asunci{\'o}n Lubiano, Mar{\'i}a~{\'A}ngeles Gil, Przemyslaw
	Grzegorzewski, and Olgierd Hryniewicz, editors, {\em Combining, Modelling and
		Analyzing Imprecision, Randomness and Dependence}, pages 483--490, Cham,
	2024. Springer Nature Switzerland.
	
	\bibitem{vskulj2025random}
	Damjan {\v{S}}kulj.
	\newblock Random walks on graphs with interval weights as a model of reversible
	imprecise markov chains.
	\newblock In {\em International Symposium on Imprecise Probabilities: Theories
		and Applications}, pages 252--262. PMLR, 2025.
	
	\bibitem{t2021average}
	Natan T'Joens and Jasper De~Bock.
	\newblock Average behaviour in discrete-time imprecise markov chains: A study
	of weak ergodicity.
	\newblock {\em International Journal of Approximate Reasoning}, 132:181--205,
	2021.
	
	\bibitem{skulj:09IJAR}
	Damjan \v{S}kulj.
	\newblock Discrete time {M}arkov chains with interval probabilities.
	\newblock {\em International Journal of Approximate Reasoning},
	50(8):1314--1329, sep 2009.
	
	\bibitem{skulj:13LAA}
	Damjan \v{S}kulj.
	\newblock A classification of invariant distributions and convergence of
	imprecise {M}arkov chains.
	\newblock {\em Linear Algebra and its Applications}, 439(9):2542--2561, nov
	2013.
	
	\bibitem{skulj:15AMC}
	Damjan \v{S}kulj.
	\newblock Efficient computation of the bounds of continuous time imprecise
	{M}arkov chains.
	\newblock {\em Applied Mathematics and Computation}, 250(0):165 -- 180, jan
	2015.
	
	\bibitem{skulj:16IJAR}
	Damjan \v{S}kulj.
	\newblock Random walks on graphs with interval weights and precise marginals.
	\newblock {\em International Journal of Approximate Reasoning}, 73:76-86, jun
	2016.
	
	\bibitem{walley:91}
	Peter Walley.
	\newblock {\em Statistical Reasoning with Imprecise Probabilities}.
	\newblock Chapman and Hall, London, New York, 1991.
	
	\bibitem{WolferWatanabe2021}
	Geoffrey Wolfer and Shun Watanabe.
	\newblock Information geometry of reversible markov chains.
	\newblock {\em Information Geometry}, 4(2):393--433, 2021.
	
	\bibitem{yin2010unified}
	Zhijun Yin, Manish Gupta, Tim Weninger, and Jiawei Han.
	\newblock A unified framework for link recommendation using random walks.
	\newblock In {\em Advances in Social Networks Analysis and Mining (ASONAM),
		2010 International Conference on}, pages 152--159. IEEE, 2010.
	
	\bibitem{zhang2014effects}
	Zhongzhi Zhang, Huan Li, and Yibin Sheng.
	\newblock Effects of reciprocity on random walks in weighted networks.
	\newblock {\em Scientific reports}, 4, 2014.
	
\end{thebibliography}

\end{document}